\newlength{\figwidth}
\newlength{\figheight}
\numberwithin{equation}{section}
\numberwithin{equation}{section}
\newtheorem{theorem}{Theorem}[section]
\newtheorem{proposition}[theorem]{Proposition}
\newtheorem{corollary}[theorem]{Corollary}
\theoremstyle{remark}
\newtheorem{example}{Example}[section]
\newtheorem{remark}[theorem]{Remark}
\newcommand{\E}{\mathcal{E}}
\newcommand{\B}{\mathcal{B}}
\renewcommand{\Pr}{\mathbb{P}}
\newcommand{\dive}{\operatorname{div}}
\newcommand{\Leb}{\operatorname{Leb}}
\newcommand{\Id}{\operatorname{Id}}
\begin{document}

\begin{frontmatter}

\title{Randomly switching evolution equations\tnoteref{t1}}
\tnotetext[t1]{This research was supported in part by the Natural Sciences and Research Council of Canada (NSERC) and  the Polish NCN grant  2017/27/B/ST1/00100.}

\author[a]{Pawe\l{} Klimasara}
\ead{p.klimasara@gmail.com}
\author[b]{Michael C. Mackey\corref{cor1}}
\cortext[cor1]{Corresponding author}
\ead{michael.mackey@mcgill.ca}

\author[c]{Andrzej Tomski}
\ead{andrzej.tomski@us.edu.pl}
\author[c]{Marta Tyran-Kami\'nska}
\ead{mtyran@us.edu.pl}

\address[a]{Chair of Cognitive Science and Mathematical Modelling, University of Information Technology and Management in Rzesz{\'o}w, Sucharskiego 2, 35-225 Rzesz{\'o}w, Poland}
\address[b]{Departments of Physiology, Physics \& Mathematics and Centre for Applied Mathematics in Biology and Medicine, McGill University, 3655 Promenade Sir William Osler, Montreal, QC,
Canada, H3G 1Y6}
\address[c]{Institute of Mathematics, University of Silesia in Katowice, Bankowa 14, 40-007
Katowice, Poland}

\date{\today}

\begin{abstract}

We present an investigation of stochastic evolution in which a family of
evolution equations in $L^1$ are driven by continuous-time Markov processes.
These are examples of so-called piecewise deterministic Markov processes (PDMP's) on the space of integrable functions. We derive equations for the first moment and correlations (of any order) of such processes. We also introduce the mean of the process at large time and describe its behaviour. The results are illustrated by some simple, yet generic,  biological examples characterized by different one-parameter types of  bifurcations.
\end{abstract}

\begin{keyword}
stochastic Liouville equation\sep stochastic semigroup \sep piecewise deterministic Markov processes\sep bifurcations
\MSC[2010]35R60\sep 60J60\sep 60K37\sep 92C40
\end{keyword}

\end{frontmatter}

\section{Introduction}

The theory of piecewise deterministic Markov processes (PDMP's) has generated considerable interest in the scientific community over the past three decades, having been first introduced in \cite{davis84}. From the point of view of modelling in natural sciences the class of PDMP's is a very broad family of stochastic models covering most of the applications, omitting mainly diffusion related phenomena. A recent monograph \cite{rudnickityran17} surveys the applicability of PDMP's to problems in  the biological sciences.

Briefly, a PDMP is a continuous-time Markov process with values in some metric space. The process evolves deterministically between the so-called jump times that form an increasing sequence of random times. Usually deterministic evolution is described by ordinary differential equations (ODE's) inducing dynamical systems (or flows). However, in a PDMP at the jump times one considers a different type of behaviour such that there is an actual jump to a different point in the phase space {\it or}  a change of the dynamics. The latter are referred to as randomly switching dynamical systems  or switching  ODE's.

The class of PDMP's   is widely used in many areas of  science, especially in biology \cite{bressloff17,rudnickityran17}, and these
include the applications of randomly switching dynamical systems.  A prototype of PDMP's is the telegraph process first studied by Goldstein \cite{goldstein} and Kac \cite{Kac} in connection with the telegraph equation where a particle moves on a real line with constant velocity alternating between two opposite values according to a Poisson process. An extension of such a process is a velocity jump process where an individual moves in a space with constant
velocity and at the jump times a new velocity is chosen randomly
\cite{HH,Stroock}. Another example is a multi-state gene network where the gene switches  between its active and inactive state at the jump times \cite{BLPR,tomski15,ZFL}.

Existing models usually describe the underlying phenomena for some population from the point of view of a single individual. In physics this is often known as a particle perspective \cite{bressloff2017stochastic}. That means that the dynamics of every single individual is driven by separate stochastic laws depending on a variety of factors, e.g. its mass or energy. However, there are alternative situations in which the entire population is affected by randomly switching environmental conditions, e.g. particles driven by a common environmental noise \cite{bressloff2017stochastic} or the response of a metabolic or gene regulatory system to an environmental stimulus \cite{belle}. This is known as a population perspective \cite{bressloff2017stochastic},
and is the approach that we use in this paper since we consider one common source of randomness which affects
all individuals in a population.

Here we treat the evolution of the density of a  population distribution in the situation
where every individual has its own deterministic dynamics but  the whole population is affected by some continuous-time Markov process with finite state space that changes the current state of all individuals. In this approach, a state is represented by a population density - an element of an infinite dimensional space.  It is particularly difficult to study the evolution of such densities and
thus  we investigate their moments and correlations of all orders. These infinite dimensional processes are dual to the class of PDMP's known as random evolutions introduced earlier by Griego and Hersh~\cite{griego}, motivated by the work of Goldstein \cite{goldstein} and Kac \cite{Kac}, see \cite{pinsky1991}. (For an amusing and non-technical account  of this history see \cite{hersh1969brownian} and \cite{hersh2003birth}.) They are particular examples of models governed by so-called
 switching Partial Differential Equations (PDEs) that recently have a growing interest in the literature (\cite{bresslofflawley15,lawleyetal2015,lawley2016, berezhkovskii2016diffusive,lawleymurphy}).
 Most of these papers focus on applications in biological sciences. From the mathematical point of view they are based on diffusion processes and PDEs of parabolic type.

In  \cite{bresslofflawley15,lawley} the authors provide the moment and correlation equations in the case of diffusion processes. The current study is a generalization of their work by giving moment and correlation equations for a broader class of processes. The main result of our paper is that the mean of a process described by randomly switching PDEs can be viewed as an appropriate stochastic semigroup (see Theorem~\ref{th1} and Corollary \ref{c:cor2}). This has further important consequences, especially that the mean of random density in the population perspective can be seen as identical to a density from the individual perspective (see Section~\ref{s:examples}).
We study the mean of the process at large time for a variety of examples that are biological applications. It allows us to investigate the asymptotic behaviour for the mean of the process in the cases of fold, transcritical, pitchfork, and Hopf bifurcations.  We also provide numerical simulations for the mean of the process which were prepared by using FiPy~\cite{FiPy:2009}.

This paper is organised as follows. In Section \ref{sec:pre} we provide some basic material from the theory of stochastic semigroups on $L^1$.  Section~\ref{sec:rand-switch-dynam} briefly reviews randomly switching dynamical systems  in Euclidean state spaces. In Section~\ref{s:Dspace} we introduce randomly switching semigroups with the state space being the set of densities leading to a stochastic evolution equation in an $L^1$ space. We study the first moment of its solutions in Section~\ref{s:moment-eqn} where we stress the correspondence between this moment equation  and the Fokker-Planck type equations from Section~\ref{s:Dspace}.   Section~\ref{s:moment-eqn} contains the main results of this paper, namely Theorem \ref{th1} and Corollary \ref{c:cor2}.   The  behaviour of the mean at large time is considered in Section~\ref{s:examples}, where we also give examples of applications of our results to situations in which the underlying dynamics display a variety of bifurcations.
In Section~\ref{sec:hmoment-eqn} we study second order correlations of  solutions of the stochastic evolution equation. We conclude in Section~\ref{sec:concl} with a brief summary. The appendix contains relevant concepts from the theory of tensor products that are used in Section~\ref{sec:hmoment-eqn}.

\section{Preliminaries}\label{sec:pre}

In this section we collect some preliminary material. We begin with  the notion of  stochastic (Markov) semigroups and provide examples of such semigroups.

Let a triple $(E,\E,m)$ be a $\sigma$-finite measure space and let $L^1=L^1(E,\E,m)$. We define  the set of densities $D \subset L^1$ by
\begin{equation*}
D= \{ f \in L^1: f \ge 0, \|f\|=1\}.
\end{equation*}
A \emph{stochastic (Markov) operator} is any linear mapping $P\colon L^1 \rightarrow L^1$ such that $P(D) \subset D$ \cite{almcmbk94}.
A family of linear operators $\{P(t)\}_{t \ge 0}$ on $L^1$ is called a \emph{stochastic semigroup} if each operator $P(t)$ is stochastic and  $\{P(t)\}_{t \ge 0}$ is a \emph{$C_0$-semigroup}, i.e.  the following conditions hold:
\begin{enumerate}
\item $P(0)=\Id,$
\item $P(t+s)=P(t)P(s)$ for all $t,s \ge 0,$
\item for every $f$ the function $t \rightarrow P(t)f$ is continuous.
\end{enumerate}
The infinitesimal \emph{generator} of
$\{P(t)\}_{t\ge0}$ is, by definition, the operator $A$ with domain $\mathcal{D}(A)\subset L^1$ defined as
\begin{align*}
\mathcal{D}(A)&=\{f\in L^1: \lim_{t\downarrow 0}\frac{1}{t}(P(t)f-f) \text{ exists} \},\\
Af&=\lim_{t\downarrow 0}\frac{1}{t}(P(t)f-f),\quad f\in \mathcal{D}(A).
\end{align*}

We will use  stochastic semigroups to represent solutions of evolution equations.
One of the simplest examples of such equations is the deterministic Liouville equation which has a simple interpretation \cite{rudnickipichortyran02}. Consider the movement of particles in the phase space $\mathbb{R}^d$, $d \ge 1$, described
by a differential equation:
\begin{equation}\label{e:intro1}
x'(t)=b(x(t)),
\end{equation}
where $b(x)$ is a $d$-dimensional vector.
Then the Liouville equation describes the evolution of the density of the distribution of particles, i.e.
if $x(t)$ has a density $u(t,x)$, then $u$ is the solution of the following equation:
\begin{equation}\label{e:intro2}
\frac{\partial u(t,x)}{\partial t}=-\dive (b(x)u(t,x)).
\end{equation}
Let,  for any $x_0\in \mathbb{R}^d$, equation \eqref{e:intro1}
with initial condition $x(0)=x_0$ have a solution for all $t$, which we denote by $\pi(t,x_0)$, and let the mapping $x_0\mapsto \pi(t,x_0)$ be \emph{non-singular} with respect to the Lebesgue measure $\Leb$ on $\mathbb{R}^d$, i.e.  $\Leb(\{x\in \mathbb{R}^d:\pi(t,x)\in B\})=0$ for all Borel sets $B$ with $\Leb(B)=0$.
If $f\colon \mathbb{R}^d\rightarrow [0, + \infty)$ is the density of the $\mathbb{R}^d$-valued random vector $\xi _0$, then the density of $\pi(t,\xi _0)$
 is given by
\begin{equation*}
P(t)f(x)=f(\pi(-t,x))\det [\frac{d}{dx}\pi(-t,x)].
\end{equation*}
The family of operators $\{P(t)\}_{t\ge 0}$ forms a stochastic semigroup on the space $L^1(\mathbb{R}^d)$ and $u(t,x)=P(t)f(x)$ is the solution of \eqref{e:intro2} with initial condition
$u(0,x)=f(x) $.

\section{Randomly switching dynamics}\label{sec:rand-switch-dynam}

 In this section we recall a classical setting of PDMP based models seen from the  perspective of individual units and taking place in a finite dimensional space. This well-known situation will be contrasted in the following sections with a population perspective approach, thus moving the analysis to infinite dimensional space.  See \cite[Figure 1]{bressloff2017stochastic} for a nice pictorial distinction between the individual and populational perspectives.

Consider  sufficiently smooth vector fields $b_i$,  $i\in I=\{0,1,\ldots, k\}$, $k \in \mathbb{N}$, defined on an open subset $G$ of $\mathbb{R}^d$. Let $E\subset G$ be a Borel set with non-empty interior and with boundary of Lebesgue measure zero. We assume that for each $i$  and $x_0\in E$ the equation
\begin{equation}\label{e:bi}
x'(t)=b_i(x(t)),
\end{equation}
with initial condition $x(0)=x_0$, has a solution $\pi_i(t,x_0)$ for all $t>0$ in the set $E$. The mapping $(t,x_0)\mapsto \pi_i(t,x_0)$ is continuous. We assume that each $\pi_i(t,\cdot)$ is non-singular with respect to the Lebesgue measure on $E$.
Now let $f\colon E\rightarrow [0, + \infty)$ be a density of an $E$-valued random vector $\xi _0$. Then the density of $\pi_i(t,\xi _0)$ is given by
\begin{equation} \label{d:Pit}
P_{i}(t)f(x)=\mathbf{1}_{E}(\pi_{i}({-t},x))f(\pi_{i}({-t},x))\det [\frac{d}{dx}\pi_{i}({-t},x)].
\end{equation}
For every $i \in I$ the family of operators $\{P_{i}(t)\}_{t\ge 0}$ forms a stochastic semigroup on $L^1(E)$ called a Frobenius-Perron semigroup \cite[Section 7.4]{almcmbk94}.

A randomly switching dynamics is a Markov process $\xi(t)=(x(t),i(t))$ on the state space $E\times I$ such that the dynamics of $x(t)$ is given by the solution of the equation
\begin{equation}\label{d:xit}
 x'(t)=b_{i(t)}\big(x(t)\big),
\end{equation}
and $i(t)$ is a continuous-time Markov chain with values in $I$ and intensity matrix $[q_{ij}]$. If the system initially is at time $t_0$  at the state $(x_0,i)$ then $x(t)$  changes in time according to equation \eqref{e:bi} as long as $i(t)=i$ and $\xi(t)=(\pi_i(t-t_0,x_0),i)$. If $i(t)$ changes its value to some $j$  with intensity $q_{ij}$, i.e.  the probability of switching from $i$ to $j$ after time $\Delta t$ is $q_{ij}\Delta t+o(\Delta t)$, $j\in I, j\neq i$, then we choose the vector field $b_j$ and we start afresh. Let $t_1$ be the moment of switching from $i$ to $j$  and $x_1=\pi_i(t_1-t_0,x_0)$. Then  we have  $\xi(t)=(\pi_j(t-t_1,x_1 ),j)$ until the next change of the state of the process $\{i(t)\}_{t\ge 0}$. This construction repeats indefinitely.
We set
\begin{equation}\label{d:qi}
  q_i=\sum_{j\neq i}q_{ij}\quad \text{and}\quad  q_{ii}=-q_{i}, \quad i\in I.
\end{equation}
Note that if $k=1$, then we have $q_{10}=-q_{11}=q_1$ and $q_{01}=-q_{00}=q_0$.

For each $i\in I$, let $\{P_i(t)\}_{t \ge 0}$
be the stochastic semigroup as in \eqref{d:Pit} and let  $(A_i,\mathcal{D}(A_i))$ be its generator. If $f=(f_i)_{i\in I}$ is a column vector
consisting of functions $f_i$ such that $f_i\in \mathcal D(A_i)$, we set
$Af=(A_if_i)_{i\in I}$ which is also a column vector.
We  denote the matrix $[q_{ij}]$ by $Q$ and its transpose $[q_{ji}]$ by $Q^T$.
Then the operator $A+Q^T$ is the infinitesimal generator of a stochastic semigroup $\{P(t)\}_{t\ge 0}$ on the space $L^1(E\times I)=L^1(E\times I,\mathcal B(E\times I),\mu)$. Here  $\mathcal B(E\times I)$ is the $\sigma$-algebra of Borel subsets of  $E\times I$ and  $\mu$ is the product of the $d$-dimensional Lebesgue measure and the counting measure on $I$.

Thus if we define $u(t)=P(t)f$,  then $u$ satisfies the   evolution equation
\begin{equation}\label{e:FPE1}
\begin{cases}
u'(t)=A u(t) + Q^T u(t), \\ u(0)=f.
\end{cases}
\end{equation}
Now using the notation $f=(f_i)_{i \in I}$ and $u(t)=\left(u_i(t)\right)_{i \in I}$, we can rewrite \eqref{e:FPE1} as
\begin{equation}\label{e:FPE}
\begin{cases}
u_i'(t)=A_i u_i(t) + \sum_{j\in I} q_{ji}u_j(t),\\ u_i(0)=f_i, \quad i \in I.
\end{cases}
\end{equation}
Moreover,
the process
$\xi(t)=(x(t),i(t))$ induces the stochastic semigroup $\{P(t)\}_{t\ge 0}$ (see \cite[Section 4.2]{rudnickityran17}), i.e. if $f$
is the density of $\xi (0)$ then $P(t)f$ is the density of $\xi (t)$
and
\begin{equation*}
\Pr(x(t) \in B, i (t)=i)=\int_{B\times\{i\}} P(t)f d\mu=\int_B u_{i}(t,x) \, dx, \quad i \in I.
 \end{equation*}

\section{Randomly switching densities}\label{s:Dspace}

In this section we look at the role of stochasticity in explaining biological phenomena from the point of view of the {\it whole population}
in an environment affected by some random disturbances.  We illustrate this approach using two examples, namely a population model with two different birth rates \cite{rudnickityran15}, and a model of inducible gene expression with positive feedback on gene transcription \cite{grif}.

\begin{example}[Population model with two different birth rates]\label{ex:trans}
Consider a population of size $x \ge 0$, a death rate $\mu$ and birth rate $\beta-cx$ with $\beta$ changing in time between two possible values $\beta_0$ and $\beta_1$ in response to some environmental disturbance. The growth of the population is assumed to be determined by the differential equation \eqref{e:bi} with
\begin{equation}\label{e:R111}
b_0(x) =(\beta_0-cx)x - \mu x \quad \text{and} \quad b_1(x)
=(\beta_1-cx)x- \mu x.
\end{equation}
For each value of $i$, equation \eqref{e:bi} induces a stochastic semigroup $\{P_i(t)\}_{t \ge 0}$ given by equation \eqref{d:Pit}. If the population initially grows with birth rate related to a value $i\in\{0, 1\}$ and has a distribution density $ g$, then after time $t>0$ the population distribution density is given by $P_i(t) g$. We consider the switching between the semigroups $\{P_0(t)\}_{t \ge 0}$ and $\{P_1(t)\}_{t \ge 0}$ according to a Markov chain $i(t) \in \{0,1\}$ and obtain a stochastic Liouville equation
(as introduced by Bressloff in \cite{bressloff2017stochastic}):
\begin{equation}\label{ex1.liouville}
  \frac{\partial u(t,x)}{\partial t}=-\frac{\partial \big(b_{i(t)}(x)u(t,x)\big)}{\partial x},
\end{equation}
where $u(t,x)=P_i(t) g(x)$ for $i(t)=i$ is the population density.
This is an infinite dimensional version of equation \eqref{d:xit}.
\end{example}

\begin{example}[One dimensional inducible Goodwin model with positive feedback on gene transcription]\label{ex:fold}
We consider the inducible operon model \cite{grif} which describes the expression of genes driven by positive feedback control of gene transcription.  This provides an effective mechanism by which a protein can maintain the expression of its own gene as well as switch between two levels of expression (un-induced and induced). We look at a cluster of identical copies of a selected gene, e.g. the cluster of multiple copies of the same gene in the case of bacteria, where one of these genes is 16S ribosomal RNA - the component of prokaryotic ribosome \cite{ribo}. We denote the concentration level at time $t$ by $x(t) \ge 0$
and assume the degradation rate is changing with constant intensities between two values $\gamma_0\text{ and }\gamma_1$  driven by  environmental noise.

For a given $i\in\{0,1\}$ the concentration level is assumed to evolve according to the nonlinear differential equation:
\begin{equation}\label{e:Fi}
x'(t)=\frac{x^n(t)}{1+x^n(t)}-\gamma_i x(t),
\end{equation}
where $n$ is a natural number.  This equation induces a semigroup $\{P_i(t)\}_{t \ge 0}$ as in formula \eqref{d:Pit}. If the gene cluster initially has a degradation rate $\gamma_i$ and a distribution density $ g$ then after time $t>0$ the population distribution density is given by
$P_i(t) g$. By taking $b_i(x)$ equal to the right-hand side of equation \eqref{e:Fi} we obtain the stochastic Liouville equation \eqref{ex1.liouville} as in the previous example.
\end{example}

We next consider the general case of a population of individuals living in an environment with random disturbances.
This situation with a population of particles affected by a common environmental noise was considered from a statistical physics viewpoint by Bressloff in \cite{bressloff2017stochastic}.
The dynamics of a population is described by equation \eqref{e:bi} with a state space being some Borel subset of $\mathbb{R}^d$
and $i$ taking values from a given finite set $I$. The environmental disturbances correspond to switching between  different values of $i$. Thus the total population density $u(t,x)$ is described by the equation:
\begin{equation}\label{d:SLE}
\frac{\partial u(t,x)}{\partial t}=-\dive(b_{i(t)}(x) u(t,x)),
\end{equation}
where $i(t)$ is a Markov chain on a discrete state space $I$. We supplement equation~\eqref{d:SLE} with an
initial condition $u(0,x)=g(x)$. Note that equation \eqref{d:SLE} is a more general case of the stochastic Liouville equation \eqref{ex1.liouville}. Randomly switching environments have also been analyzed in the case of diffusion processes \cite{lawley}.
In such situations the stochastic Liouville equation is replaced by an appropriate parabolic equation. We may generalize all these
cases by looking at a general scheme for randomly switching stochastic semigroups.

Let $L^1(E)=L^1(E,\B(E),m)$, where $(E,\rho)$ is a separable metric space and $m$ is a  $\sigma$-finite measure,  and let $I=\{0,1,\ldots,k\}$, $k \in \mathbb{N}$. For every
$i \in I$, consider a linear operator $(A_i,\mathcal{D}(A_i))$ which is the generator of a stochastic semigroup $\{P_i(t)\}_{t\ge 0}$
on $L^1(E)$. We assume that the stochastic process $\{i(t)\}_{t\ge 0}$ is a continuous time Markov chain with state space $I$ and constant intensities $q_{ij}$, $i,j \in I$. We consider the following stochastic evolution equation:
\begin{equation}\label{d:pdm}
 \begin{cases}
 u'(t)=A_{i(t)}u(t),\\
 u(0)=g.
 \end{cases}
\end{equation}
Equation \eqref{d:pdm} generates a well-defined PDMP~\cite[p. 32]{rudnickityran17}
\begin{equation}\label{def:process}
X(t)=(u(t),i(t)),\quad t\ge 0,
\end{equation}
with values in the space $L^1(E)\times I$.

Now we derive the solution $u(t)$ of \eqref{d:pdm}.
Let $t_0=0$ and for each $n\in \mathbb{N}$  let $t_n$ be the $n$th jump time of the Markov process $i(t)$ so that
\[
\Pr(t_{n+1}-t_n>t|i(t_n)=i)=e^{-q_i t}, \quad  n\ge 0.
\]
Let $\mathbb{P}_j$ be the probability measure defined on sample paths $\omega$ of the process $\{i(t)\}_{t\ge 0}$ with $i(0)=j$. Denote integration with respect to the measure  $\mathbb{P}_j$ by $\mathbb{E}_j$,
set $U(t_0)=U(0)=\Id$ and define
\begin{equation*}
U(t)= P_{i(t_n)}(t-t_n)\circ U(t_n)\quad \text{for } t\in[t_n,t_{n+1}), n\ge 0,
\end{equation*}
where
\[
U(t_n)=P_{i(t_{n-1})}(t_{n}-t_{n-1})\circ \dots\circ P_{i(t_1)}(t_2-t_1)\circ P_{i(0)}(t_1).
\]
Further, let
\begin{equation}\label{d:Nt}
N(t)=\max \{n \ge 0:  t_n\le t \}
\end{equation}
be the number of jumps of the process $\{i(t)\}_{t\ge 0}$ up to time $t$. Using $N(t)$ we can write
\begin{equation}\label{d:Ut}
U(t)=P_{i(t_{N(t)})}(t-t_{N(t)})U(t_{N(t)}),\quad t\ge 0.
\end{equation}
Then $u(t)=U(t) g$ for $g \in L^1(E)$ is the solution of \eqref{d:pdm}. Note that $U(t)$ depends on $\omega$. For each $t>0$, $g\in L^1(E)$, and $\omega$, if $t\in [t_n(\omega),t_{n+1}(\omega))$ for some $n=n(\omega)$ then for  $i=i(t_n(\omega))$ we have
\[
U(t,\omega) g=P_i(t-t_n(\omega))U(t_n(\omega)) g,
\]
and  $U(t,\omega)$ is a composition of a finite number of stochastic operators. Thus
\[
\|U(t,\omega) g\|=\int_E |U(t,\omega) g(x)|m(dx)\le  \int_E |g(x)|m(dx)=\|g\|
\]
and  $U(t) g$ can be regarded as a random variable with values in $L^1(E)$. It is, in fact,  a Bochner integrable $L^1(E)$-valued random variable, by \cite[Theorem 3.7.4]{hille}. Furthermore, for each $j\in I$, $\mathbb{E}_j(U(t)g)$ is represented by the a.e. finite function $\mathbb{E}_{j}(u(t,x))$  where $u(t,x)=U(t)g(x)$, i.e. for any $h\in L^{\infty}(E, \mathcal{B}(E),m)$
\begin{equation*}
\int_E \mathbb{E}_j(U(t)g)(x)h(x) m(dx) =\int_E \mathbb{E}_j(u(t,x))h(x)m(dx).
\end{equation*}

In the next sections we derive equations for the moments of $u(t,x)$ and compare them to the evolution
equation \eqref{e:FPE1} from Section \ref{sec:rand-switch-dynam}.

\section{First moment equations}\label{s:moment-eqn}

We continue with the general setting from Section \ref{s:Dspace} and study the first
moment of the solution $u(t)$ of \eqref{d:pdm}.  Recall that $u(t,x)=U(t)g(x)$,
where $U(t)$ is given by \eqref{d:Ut}.
Using a simple decomposition, the first moment of $u(t,x)$ can be written  as
\begin{equation*}
V(t,x)=\mathbb{E}(u(t,x))= \sum_{j\in I}\mathbb{E}_j(u(t,x))= \sum_{j\in I} \sum_{i\in I}\mathbb{E}_j (\mathbf{1}_{\{i(t)=i\}}u(t,x)), \quad x\in E,  t\ge 0,
\end{equation*}
where $\mathbb{E}$ denotes the expectation and $\mathbf{1}_F$ is the indicator function of an event $F$.
If we take
\begin{equation}\label{d:Vi}
V_i(t,x)=\sum_{j\in I} \mathbb{E}_j (\mathbf{1}_{\{i(t)=i\}}u(t,x)),
\end{equation}
then
\begin{equation}\label{d:V}
V(t,x)=\sum_{i\in I} V_i(t,x), \quad x\in E,  t\ge 0.
\end{equation}
We will show that:
\begin{equation}\label{l:lawley}
\frac{\partial}{\partial t} V_i = A_i V_i + \sum_{j} q_{ji}V_j,\quad i\in I.
\end{equation}
It will turn out that $(V_i)_{i\in I}$ can be represented as a stochastic semigroup.

Let $L^1(E\times I)=L^1(E\times I,\mathcal{B}(E\times I), \mu )$, where $\mu$ is the product of
the measure $m$ on $E$ and the counting measure on $I$.  We denote elements of the space $L^1(E\times I)$ by
$ f :=(f_i)_{i \in I} \in L^1(E\times I)$, where $f_i \in L^1(E)$, $i \in I$, and define the family of operators $\{P(t)\}_{t \ge 0}$ on the space $L^1(E\times I)$ by
\begin{equation}\label{theorem1a}
(P(t)f)_i=\sum_{j \in I} \mathbb{E}_j (\mathbf{1}_{\{i(t)=i\}}U(t)f_{j})
, \quad i\in I.
\end{equation}
We impose the following conditions:
\begin{enumerate}[(I)]
\item\label{a:1} There exist a Banach space $B$, a set of Borel measurable functions $\mathcal{H}\subseteq B$, and a  family $\mathcal{C}$ of Borel subsets of $E$ that is closed under intersections and generates the Borel $\sigma$-algebra $\mathcal{B}(E)$    such that  for each set $F\in \mathcal{C}$ there is a nonincreasing sequence of function $h_n\in \mathcal{H}$ satisfying
    \begin{equation}\label{e:approx}
    \lim_{n\to\infty}h_n(x)=\mathbf{1}_F(x),\quad   x\in E.
    \end{equation}
\item\label{a:2} Let $\{P_i(t)\}_{t\ge 0}$ be a stochastic semigroup on $L^1(E)$ with generator $(A_i,\mathcal{D}(A_i))$, $i\in I$.  For each $i\in I$  there is a $C_0$-semigroup $\{T_i(t)\}_{t\ge 0}$ on $B$ such that
\begin{equation}\label{a:adjoint}
\langle P_i(t)g,h\rangle=\langle g, T_i(t)h\rangle, \quad g\in L^1(E), h\in B, t\ge 0.
\end{equation}
Here, the scalar product of two functions $g,h$ with their domain
 $E$ is defined by $\langle g, h\rangle :=\int_E g(x) h(x) m(dx)$.
\end{enumerate}

Now we state and prove  one of  the main results of this paper.

\begin{theorem}\label{th1} Assume conditions \eqref{a:1} and \eqref{a:2}.
Then the family of operators $\{P(t)\}_{t \ge 0}$ defined in \eqref{theorem1a} is a stochastic semigroup on $L^1(E\times I)$.
Moreover, the infinitesimal generator of the semigroup $\{P(t)\}_{t\ge 0}$
is the operator $A+Q^T$
where
$Af =(A_i f_i)_{i \in I}$ and
$Q^Tf =(\sum_{j} q_{ji}f_j)_{i \in I}$
for $f =(f_i)_{i \in I}$ with  $f_i\in \mathcal{D}(A_i)$, $i\in I$.
\end{theorem}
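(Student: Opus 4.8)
The plan is to verify the three defining properties of a stochastic semigroup for $\{P(t)\}_{t\ge 0}$ and then separately identify its generator. Well-definedness is already in hand: for $f=(f_j)_{j\in I}$ each $U(t)f_j$ is a Bochner integrable $L^1(E)$-valued random variable, so $(P(t)f)_i=\sum_{j}\mathbb{E}_j(\mathbf{1}_{\{i(t)=i\}}U(t)f_j)$ is a genuine element of $L^1(E)$ and $P(t)$ is linear. Positivity is immediate, since for each path $\omega$ the operator $U(t,\omega)$ is a finite composition of the positive operators $P_i(\cdot)$, and indicators and expectations preserve positivity. For norm preservation on densities I would integrate over $E$ and interchange with $\mathbb{E}_j$ (Fubini for Bochner integrals): as each $U(t,\omega)$ is stochastic, $\int_E U(t,\omega)f_j\,dm=\|f_j\|$ for $f_j\ge 0$, so $\int_E(P(t)f)_i\,dm=\sum_j \mathbb{P}_j(i(t)=i)\|f_j\|$; summing over $i\in I$ and using $\sum_{i}\mathbb{P}_j(i(t)=i)=1$ gives $\|P(t)f\|=\sum_j\|f_j\|=\|f\|$. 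Thus each $P(t)$ is a stochastic operator, and $P(0)=\Id$ follows by reading off $U(0)=\Id$ and $\mathbf{1}_{\{i(0)=i\}}=\mathbf{1}_{\{j=i\}}$ under $\mathbb{P}_j$.

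The semigroup law $P(t+s)=P(t)P(s)$ is the structural heart of the argument. I would derive it from two ingredients: the \emph{cocycle identity} $U(t+s,\omega)=U(t,\theta_s\omega)\circ U(s,\omega)$, where $\theta_s$ denotes the time-shift on paths, and the \emph{Markov property} of $\{i(t)\}_{t\ge 0}$. The cocycle identity follows by splitting the path decomposition of $U$ at the instant $s$; the single interval straddling $s$ is absorbed by the semigroup property of the one operator $P_{i(s)}$, i.e.\ $P_{i(s)}(t_{N(s)+1}-s)\circ P_{i(s)}(s-t_{N(s)})=P_{i(s)}(t_{N(s)+1}-t_{N(s)})$. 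Substituting this into $(P(t+s)f)_i$ and conditioning on $i(s)$, the factor $U(s,\omega)f_j$ is measurable with respect to the pre-$s$ path while $\mathbf{1}_{\{i(t,\theta_s\omega)=i\}}U(t,\theta_s\omega)$ depends only on the post-$s$ path; the Markov property then replaces the conditional law of the future given $i(s)=l$ by $\mathbb{P}_l$, and the double expectation factors precisely into $(P(t)P(s)f)_i$.

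Here is the main obstacle, and the precise place where conditions \eqref{a:1} and \eqref{a:2} enter. Passing the \emph{random} operator $U(t,\theta_s\omega)$ through the expectation of the $L^1(E)$-valued variable $U(s,\omega)f_j$ is not a formal manipulation, because $U(t,\cdot)$ is itself random over the future path. I would legitimize it by testing against $h\in B$: condition \eqref{a:2} yields $\langle U(t,\omega)g,h\rangle=\langle g,\,U(t,\omega)^{*}h\rangle$, where $U(t,\omega)^{*}$ is the corresponding composition of the pre-adjoint $C_0$-semigroups $\{T_i(t)\}_{t\ge 0}$ obtained by applying \eqref{a:adjoint} along the jump times in reverse order. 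All the interchanges, the cocycle substitution, and the Markov factorization then become identities between scalars, where ordinary Fubini and dominated convergence apply cleanly. Condition \eqref{a:1} then transfers the resulting identities from test functions $h\in\mathcal{H}$ to indicators $\mathbf{1}_F$, $F\in\mathcal{C}$, by monotone convergence; since $\mathcal{C}$ is closed under intersections and generates $\mathcal{B}(E)$, a $\pi$-$\lambda$ argument upgrades these to the required identities in $L^1(E)$ for the densities $(P(t)f)_i$.

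For strong continuity and the generator I would use the jump-time decomposition of $U(t)$. Splitting according to $N(t)=0$, $N(t)=1$, and $N(t)\ge 2$, one obtains
\[
(P(t)f)_i=e^{-q_it}P_i(t)f_i+\sum_{j\ne i}\int_0^t q_{ji}\,e^{-q_j\tau}e^{-q_i(t-\tau)}P_i(t-\tau)P_j(\tau)f_j\,d\tau+R_i(t),
\]
with $\|R_i(t)\|=O(t^2)$, because $\mathbb{P}_j(N(t)\ge 2)=O(t^2)$ and every summand is norm-bounded by $\|f\|$. Strong continuity at $0$ is then clear, the first term tending to $f_i$ and the remainder to $0$. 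For $f_i\in\mathcal{D}(A_i)$ I would differentiate at $t=0^{+}$: writing $e^{-q_it}P_i(t)f_i-f_i=e^{-q_it}(P_i(t)f_i-f_i)+(e^{-q_it}-1)f_i$ gives $A_if_i-q_if_i$, while the averaged integral in the one-jump term converges in $L^1(E)$ to $\sum_{j\ne i}q_{ji}f_j$ by strong continuity of the $P_i$. Recalling $q_{ii}=-q_i$, the limit is $A_if_i+\sum_{j}q_{ji}f_j=(A+Q^T)f$, as claimed; and the same duality-and-approximation device furnished by \eqref{a:2} and \eqref{a:1} is exactly what makes these $L^1$ limits rigorous.
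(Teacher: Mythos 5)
Your proposal is correct in its main thrust, but it takes a genuinely different route from the paper's proof, and it has one gap at the very end that needs to be closed.

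The paper runs everything through the dual side: it invokes Griego and Hersh's theory of random evolutions to assert that $(T(t)h)_i=\mathbb{E}_i(M(t)h_{i(t)})$ defines a $C_0$-semigroup on $B^{k+1}$ whose generator is $(\mathcal{L}h)_i=\mathcal{L}_ih_i+\sum_j q_{ij}h_j$, and then transfers both the semigroup law and the generator to $\{P(t)\}_{t\ge 0}$ through the duality $\langle T(t)h,f\rangle=\langle h,P(t)f\rangle$ (established with Hille's lemma), finishing with the same $\pi$-system upgrade from test functions in $\mathcal{H}$ to indicators that you use. You instead prove the two imported facts directly on the $L^1$ side: the semigroup law from the pathwise cocycle identity $U(t+s,\omega)=U(t,\theta_s\omega)\circ U(s,\omega)$ plus the Markov property of $i(t)$, with condition \eqref{a:2} used to turn the Banach-space-valued interchanges into scalar Fubini statements and condition \eqref{a:1} to upgrade them to identities in $L^1(E)$; and strong continuity together with the derivative at $t=0^+$ from the jump-time (Dyson) expansion with $O(t^2)$ remainder. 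Each approach buys something: the paper's proof is shorter but outsources the probabilistic core to \cite{griego}; yours is self-contained, gives a cleaner proof of the $C_0$ property than the paper's one-line appeal to right-continuity of sample paths, and yields a \emph{strong} derivative at $0^+$ where the paper only obtains a weak duality identity $\langle f,\mathcal{L}h\rangle=\langle (A+Q^T)f,h\rangle$.

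The gap: differentiating $P(t)f$ at $t=0^+$ for $f\in\prod_{i\in I}\mathcal{D}(A_i)$ shows only that $A+Q^T$ (on that domain) is a \emph{restriction} of the generator $G$ of $\{P(t)\}_{t\ge 0}$; the theorem asserts equality, and a priori $G$ could be a proper extension, i.e. $\mathcal{D}(G)$ could be strictly larger. You can close this with a standard maximality argument: the diagonal operator $A$ with domain $\prod_{i\in I}\mathcal{D}(A_i)$ generates the diagonal semigroup $\bigl(P_i(t)f_i\bigr)_{i\in I}$ on $L^1(E\times I)$, and $Q^T$ is bounded (the set $I$ is finite and the $q_{ij}$ are constants), so by the bounded perturbation theorem $A+Q^T$ itself generates a $C_0$-semigroup; now if $\lambda>0$ is large enough that $\lambda-(A+Q^T)$ is surjective and $\lambda-G$ is injective, then for $f\in\mathcal{D}(G)$ one finds $g\in\prod_{i\in I}\mathcal{D}(A_i)$ with $(\lambda-(A+Q^T))g=(\lambda-G)f$, whence $(\lambda-G)(f-g)=0$ and $f=g$. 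Thus $G=A+Q^T$ exactly. With those two or three lines added, your argument is complete; note that the paper avoids this issue only by citing the Griego--Hersh identification of $\mathcal{L}$, and its own concluding duality step is comparably terse.
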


\begin{proof}
  Given  $B$ and $\{T_i(t)\}_{t\ge 0}$, $i\in I$,  as in conditions \eqref{a:1} and \eqref{a:2}, we define   the random evolution family $\{ M(t), t \ge 0\}$ of operators  on the space $B $ by  \cite{griego}
\begin{equation*}
M(t)= T_{i(0)}(t_1) T_{i(t_1)}(t_2-t_1)\cdots T_{i(t_{N(t)})}(t-t_{N(t)}),
\end{equation*}
where $N(t)$ is as in \eqref{d:Nt}. Consider the product space $B \times B \times \cdots \times B = B^{k+1}$, where $k+1$ denotes the number of elements of the set $I$, and for any $  h :=(h_i)_{i \in I} \in  B^{k+1}$,
$i \in I$ define
\begin{equation}\label{d:g1}
( T (t)  h  )_i=\mathbb{E}_i (M(t) h_{i(t)}).
\end{equation}
Griego and Hersh  \cite{griego} showed that the family
$\{ T (t)\}_{t \ge 0 }$ forms a
strongly continuous semigroup of bounded linear operators on $B^{k+1}$.
The integral in \eqref{d:g1} with respect to $\mathbb{P}_i$ is understood in the sense of Bochner.
Observe that $M(t)$ depends on $\omega$ and that $\omega\mapsto M(t,\omega) h_{i(t)(\omega)}$ is $\mathbb{P}_j$-Bochner integrable for each $j\in I$ (see \cite[Lemma 2]{griego}).
By Hille's lemma for the Bochner integral \cite[Theorem 3.7.12]{hille}  in $B$ we obtain
\begin{equation}\label{Hille}
\langle \mathbb{E}_j (M(t) h_{i(t)}),g \rangle =\mathbb{E}_j (\langle M(t)h_{i(t)} ,g  \rangle), \quad j\in I, g\in L^1(E), h\in  B^{k+1},
\end{equation}
where the integral on the right-hand side of \eqref{Hille} is in the sense of Lebesgue.

For $f \in L^1(E\times I)$ and $ h \in  B^{k+1}$,  set
\[
\langle f , h\rangle=\sum_{j\in I}\langle f_j,h_j\rangle=  \sum_{j\in I}\int_E  f_j(x)h_j(x)m(dx).
\]
We first show that
\begin{equation}\label{d:g2}
\langle  T (t)  h,f \rangle = \langle  h,P(t) f  \rangle.
\end{equation}
It follows from \eqref{Hille} that
\[
\langle  T (t)  h,f  \rangle =\sum_{j \in I} \langle \mathbb{E}_j (M(t) h_{i(t)}),f_j \rangle
= \sum_{j \in I} \mathbb{E}_j (\langle M(t)h_{i(t)} ,f_{j} \rangle).
\]
Using \eqref{a:adjoint} it is easily seen that  \[
\langle M(t)h_{i(t)} ,f_{j} \rangle=\langle h_{i(t)} ,U(t)f_{j} \rangle, \quad t\ge 0.
\]
Hence
\begin{align*}
\langle  T (t)  h,f \rangle =
\sum_{j \in I} \mathbb{E}_j (\langle h_{i(t)}, U(t) f_{j} \rangle)=\sum_{j,i \in I} \mathbb{E}_j (\langle h_{i} , \mathbf{1}_{\{i(t)=i\}}U(t) f_{j} \rangle).
\end{align*}
Using Hille's lemma for Bochner integrals in $L^1(E)$, we see that
\[
\langle  T (t)  h,f \rangle =\sum_{i \in I} \langle h_{i},\sum_{j \in I} \mathbb{E}_j ( \mathbf{1}_{\{i(t)=i\}}U(t) f_{j} )\rangle=\langle h,P(t) f  \rangle,
\]
as claimed.

Now, we check the semigroup property $P(t+s)f =P(t)\circ P(s)f $ for $t,s\ge 0$, $f \in L^1(E\times I)$.
Since $\{ T (t)\}_{t\ge 0}$ is a semigroup, it follows from \eqref{d:g2} that
\begin{equation*}
\langle  T (t) \circ  T (s)   h,f  \rangle=\langle  T (t+s)  h,f  \rangle =\langle  h, P(t+s) f  \rangle
\end{equation*}
for any $t,s \ge 0$ and $f \in L^1(E\times I),  h\in  B^{k+1}$. This together with \eqref{d:g2} gives
\begin{equation*}
\langle  h, P(t+s) f  \rangle = \langle  T (s)  h,  P(t) f  \rangle = \langle   h,P(s) \circ P(t) f  \rangle,
\end{equation*}
implying that for each $i\in I$ and $h\in \mathcal{H}$ we have
\begin{equation}\label{e:semi}
\int_E {h} (P(t+s) f )_i dm=\int_E{h}(P(s) \circ P(t) f )_idm.
\end{equation}
Since for each $i$ the semigroup $\{P_i(t)\}_{t\ge 0}$ is stochastic, we see that each operator $P(t)$ is stochastic  on $L^1(E\times I)$. Thus decomposing an arbitrary  $ f $ into its positive and negative parts, we can assume that $f \in L^1(E\times I)$ is nonnegative.
Since \eqref{e:semi} holds for each $h_n$, the Lebesgue convergence theorem implies that \eqref{e:semi} holds for $\mathbf{1}_F$, showing that
\begin{equation}\label{e:semis}
\int_{F} (P(t+s) f )_idm=\int_F (P(t)\circ P(s)f )_idm
\end{equation}
for all sets $F\in \mathcal{C}$.
The family $\mathcal{C}$ is a $\pi$-system, i.e., $F_1\cap F_2\in \mathcal{C}$ for $F_1,F_2\in \mathcal{C}$, and equality \eqref{e:semis} holds for all $F\in \mathcal{C}\cup\{E\}$.  Hence we conclude that \eqref{e:semis} holds for all Borel subsets of $E$. Consequently, $(P(t+s) f )_i=(P(t)\circ P(s)f )_i$ for all $t,s\ge 0$ and $i\in I$.
Since  almost all sample paths of the stochastic switching process $i(t)$ are right-continuous functions, we conclude that $\{P(t)\}_{t\ge 0}$
is a $C_0$-semigroup, completing the proof that $\{P(t)\}_{t\ge 0}$ is a stochastic semigroup.

Finally, it was shown in \cite{griego} that the generator $\mathcal{L}$ of the semigroup $\{ T (t)\}_{t\ge 0}$ is given by
\[
({\mathcal{L}} h)_i=\mathcal{L}_ih_i+\sum_{j\in I} q_{ij}h_j
\]
for $ h=(h_i)_{i\in I}\in  B^{k+1}$ with $h_i\in \mathcal{D}(\mathcal{L}_i)$, $i\in I$, where $(\mathcal{L}_i,\mathcal{D}(\mathcal{L}_i))$ is the generator of the semigroup $\{T_i(t)\}_{t\ge 0}$ on $B$, $i\in I$.
Observe that
\[
\langle f ,{\mathcal{L}} h \rangle =\sum_{i\in I}
\langle f_i,\mathcal{L}_ih_i+\sum_{j\in I} q_{ij}h_j \rangle =\sum_{i\in I}
\langle f_i,\mathcal{L}_ih_i\rangle +\sum_{i\in I}
\sum_{j\in I}\langle f_i, q_{ij}h_j \rangle
\]
for $f =(f_i)_{i\in I}$ with $f_i\in \mathcal{D}(A_i)$, $i\in I$.
Since we have
\[
\langle f_i,\mathcal{L}_ih_i\rangle=\langle A_if_i,h_i\rangle, \quad i\in I,
\]
by assumption \eqref{a:2}, we conclude that
\[
\langle f ,{\mathcal{L}} h \rangle=\sum_{i\in I}
\langle A_if_i,h_i\rangle +
\sum_{j\in I}\langle \sum_{i\in I}q_{ij}f_i,h_j \rangle=\langle (A+Q^T)f , h \rangle,
\]
implying the form of the generator of the semigroup $\{P(t)\}_{t\ge 0}$.
\end{proof}

We next show that Theorem~\ref{th1} can be applied to
semigroups of Frobenius-Perron operators given by \eqref{d:Pit}.

\begin{corollary}\label{c:1}
Let $E$ be a Borel subset of $\mathbb{R}^d$ and for each $i \in I$ let  $\{P_i(t)\}_{t\ge 0}$ be given by \eqref{d:Pit}. Then conditions \eqref{a:1} and \eqref{a:2} hold, and  Theorem~\ref{th1} does also.
\end{corollary}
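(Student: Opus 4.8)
The plan is to supply the two ingredients required by Theorem~\ref{th1}: a Banach space $B$ carrying dual $C_0$-semigroups $\{T_i(t)\}_{t\ge 0}$ satisfying \eqref{a:adjoint}, and an approximation scheme realizing \eqref{e:approx}. For the duality, I would take $B=C_0(E)$ and let $T_i(t)$ be the Koopman (composition) semigroup $T_i(t)h=h\circ\pi_i(t,\cdot)$ associated with the flow. Then condition \eqref{a:2} is essentially the change-of-variables formula: substituting $y=\pi_i(-t,x)$ in $\langle P_i(t)g,h\rangle$, and using that $\det[\tfrac{d}{dx}\pi_i(-t,x)]$ evaluated at $x=\pi_i(t,y)$ is the reciprocal of the Jacobian of $y\mapsto\pi_i(t,y)$, cancels the density factor in \eqref{d:Pit} and leaves $\int_E g(y)\,h(\pi_i(t,y))\,dy=\langle g,T_i(t)h\rangle$; non-singularity of $\pi_i(t,\cdot)$ guarantees the substitution is licit. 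The flow identity $\pi_i(t+s,\cdot)=\pi_i(t,\pi_i(s,\cdot))$ gives the semigroup law $T_i(t+s)=T_i(s)T_i(t)$, and $\|T_i(t)h\|_\infty\le\|h\|_\infty$, so each $T_i(t)$ is a contraction.

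The delicate part of \eqref{a:2} — and what I expect to be the main obstacle — is showing that $\{T_i(t)\}_{t\ge 0}$ is a genuine $C_0$-semigroup on $B$, since composition semigroups are typically not strongly continuous on the full space $C_b(E)$. Working in $C_0(E)$ repairs both issues. Invariance $T_i(t)\big(C_0(E)\big)\subseteq C_0(E)$ follows because $\pi_i(t,\cdot)$ is a homeomorphism with continuous inverse $\pi_i(-t,\cdot)$, so the preimage $\{x:\pi_i(t,x)\in K\}=\pi_i(-t,K)$ of a compact set $K$ is compact; hence $h\circ\pi_i(t,\cdot)$ vanishes at infinity whenever $h$ does. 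For strong continuity I would fix $h\in C_0(E)$, split $E$ into a large compact set $K'$ and its complement, use uniform continuity of $h$ together with the local-uniform convergence $\pi_i(t,\cdot)\to\Id$ (from continuity of $(t,x)\mapsto\pi_i(t,x)$ and compactness of $[0,\delta]\times K'$) to control $\sup_{x}|h(\pi_i(t,x))-h(x)|$ on $K'$, and the smallness of $h$ off $K'$, combined with the boundedness of $\pi_i(-t,K')$ for small $t$, to control the remainder. The only care needed is that $E$, being merely Borel, is handled through its subspace topology; the standing assumptions that $E$ has non-empty interior and null boundary, and that $\pi_i$ maps $E$ continuously into itself, are what make $C_0(E)$ an appropriate locally compact setting.

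It remains to verify \eqref{a:1}. I would take $\mathcal{C}$ to be the family of sets $E\cap R$ with $R=\prod_{j=1}^d[a_j,b_j]$ a closed box in $\mathbb{R}^d$. This family is closed under intersection and generates $\mathcal{B}(E)$, since closed boxes generate $\mathcal{B}(\mathbb{R}^d)$. For $F=E\cap R\in\mathcal{C}$, choose Urysohn functions $\phi_n\in C_c(\mathbb{R}^d)$ with $\phi_n\downarrow\mathbf{1}_R$ pointwise (for instance $\phi_n(x)=\max\{0,1-n\,\mathrm{dist}(x,R)\}$), and set $h_n=\phi_n|_E$; then the $h_n\in\mathcal{H}:=C_c(\mathbb{R}^d)|_E\subseteq C_0(E)=B$ are nonincreasing with $\lim_n h_n(x)=\mathbf{1}_R(x)=\mathbf{1}_F(x)$ for $x\in E$, which is exactly \eqref{e:approx}. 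With \eqref{a:1} and \eqref{a:2} in hand, Theorem~\ref{th1} applies and yields the conclusion.
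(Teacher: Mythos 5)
Your handling of condition \eqref{a:2} is sound and is essentially the paper's own: the paper also takes $B=C_0(E)$ (or $C(E)$ when $E$ is compact) with the Koopman semigroup $T_i(t)h=h\circ\pi_i(t,\cdot)$, simply citing \cite{nagel86-0} for strong continuity and \cite{almcmbk94} for the duality \eqref{a:adjoint} that you derive by change of variables. The genuine gap is in your verification of condition \eqref{a:1}. Your family $\mathcal{C}=\{E\cap R:\ R\ \text{a closed box}\}$ is incompatible with the requirement $\mathcal{H}\subseteq B=C_0(E)$. Indeed, if $h_n\in C_0(E)$ is nonincreasing with pointwise limit $\mathbf{1}_F$, then $h_n\ge\mathbf{1}_F$ for every $n$, so $F\subseteq\{x\in E:h_1(x)\ge 1/2\}$; the latter set is compact (it is closed in $E$ and, by the definition of vanishing at infinity, contained in a compact subset of $E$), and $F=E\cap R$ is closed in $E$, so $F$ would have to be compact. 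But $E\cap R$ fails to be compact whenever $E$ is not closed and the box $R$ reaches outside $E$ --- precisely the situation in the paper's examples, e.g.\ $E=(0,\infty)$ and $R=[-1,1]$ give $F=(0,1]$. Concretely, your $h_n=\phi_n|_E$ with $\phi_n(x)=\max\{0,1-n\,\mathrm{dist}(x,R)\}$ equals $1$ on all of $(0,1]$, so $\{x\in E: h_n(x)\ge 1/2\}$ is not compact and $h_n\notin C_0(E)$; moreover, by the argument just given, no other choice of a nonincreasing sequence in $C_0(E)$ can converge to $\mathbf{1}_{(0,1]}$, so this cannot be repaired by re-indexing or by smoothing differently.

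The repair is exactly what the paper does: take $\mathcal{C}$ to be the family of compact subsets of $E$ (still a $\pi$-system generating $\mathcal{B}(E)$), and for compact $F$ use the same tent functions $h_n(x)=\max\{1-n\rho(x,F),0\}$. Then $\{x\in E:h_n(x)\ge\varepsilon\}$ is contained in the closed $(1/n)$-neighbourhood of $F$ in $\mathbb{R}^d$, which is compact and, for $n$ large, lies inside $E$ (when $E$ is open; trivially when $E$ is compact), so $h_n\in B$ from some index on and \eqref{e:approx} holds after discarding finitely many initial terms. In short: the approximating sets must be taken compactly inside $E$, not as traces $E\cap R$ of ambient boxes, because the ``infinity'' of $E$ includes its finite boundary points; the rest of your argument can stand as written.
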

\begin{proof}
Let $B$ be the space $C(E)$ of continuous functions on $E$  if $E$ is compact or the space $C_0(E)$ of continuous functions on $E$ which vanish at infinity, otherwise. Recall that the $\sigma$-algebra
of Borel subsets of $E$ is generated by the family of compact sets. For each compact set $F$  the function
$h_n$ defined
by
\begin{equation*}
h_n(x)=\max\{1-n\rho(x,F),0\},
\end{equation*}
where $\rho(x,F)$ denotes the distance of the point $x$ from the set $F$, is globally Lipschitz. Since $h_n$ belongs to $B$ and satisfies \eqref{e:approx}, we conclude that condition \eqref{a:1} holds.
We define $T_i(t)\colon B\to B$  by
$T_i(t) h(x)=h(\pi_i(t,x)),$ $t\ge 0$, $x\in E$, $h \in B$. Note that $T_i(t)$ is a $C_0$-semigroup on $B$, see \cite[Section B-II]{nagel86-0} and condition \eqref{a:adjoint} holds, see \cite[Section 7.4]{almcmbk94}.
\end{proof}

Using Theorem \ref{th1} we obtain the following,  which is the second main result of this paper: \begin{corollary}\label{c:cor2} Assume conditions \eqref{a:1} and \eqref{a:2}. Let $\{P(t)\}_{t\ge 0}$ be given by \eqref{theorem1a} and $u(t)$ by~\eqref{d:pdm}. For each $g\in L^1(E)$ and $l\in I$ such that $u(0)=g$ and $i(0)=l$ we have $V_i(t,x)=(P(t)f )_i(x),$
where $V_i$ is as in \eqref{d:Vi} and $f =(f_j)_{j \in I}$ is of the form
\[
f_j=
\begin{cases}
g,\quad j=l, \\ 0,\quad j \neq l.
\end{cases}
\]
In particular, the mean of the process in  \eqref{def:process} is given by
\begin{equation}\label{e:meanP}
V(t,x)=\mathbb{E}(u(t,x))=\sum_{i\in I}(P(t)f )_i(x).
\end{equation}
\end{corollary}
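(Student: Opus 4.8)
The plan is to prove the corollary by direct comparison of the two defining formulas \eqref{theorem1a} and \eqref{d:Vi} once the special initial data are inserted, so that essentially no new work beyond Theorem~\ref{th1} is required. First I would substitute the vector $f=(f_j)_{j\in I}$ with $f_l=g$ and $f_j=0$ for $j\neq l$ into the definition \eqref{theorem1a} of $P(t)$. Since every summand indexed by $j\neq l$ carries the factor $f_j=0$, the sum over $j\in I$ collapses to the single term $j=l$, giving
\[
(P(t)f)_i=\mathbb{E}_l\big(\mathbf{1}_{\{i(t)=i\}}U(t)g\big),\quad i\in I.
\]

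Next I would identify $U(t)g$ with the solution $u(t)$ of \eqref{d:pdm}. By the construction in Section~\ref{s:Dspace} we have $u(t)=U(t)g$ with $U(t)$ built along the sample path $\omega$, and under $\mathbb{E}_l$ the path starts from $i(0)=l$, which is exactly the initial condition imposed in the statement. Thus $U(t)g$ computed under $\mathbb{P}_l$ is the genuine solution $u(t,x)$, and the $L^1(E)$-valued Bochner expectation $\mathbb{E}_l(\mathbf{1}_{\{i(t)=i\}}U(t)g)$ is represented by the a.e.\ finite function $\mathbb{E}_l(\mathbf{1}_{\{i(t)=i\}}u(t,x))$, using the identification of Bochner expectations with pointwise scalar expectations recorded at the end of Section~\ref{s:Dspace}.

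Finally I would return to the definition \eqref{d:Vi} of $V_i$ and argue that, because the initial state is fixed at $i(0)=l$, only the $j=l$ term survives the sum over $j\in I$: the contributions indexed by $j\neq l$ drop out since the chain cannot start at a state other than $l$. Hence $V_i(t,x)=\mathbb{E}_l(\mathbf{1}_{\{i(t)=i\}}u(t,x))=(P(t)f)_i(x)$, which is the first assertion. Summing over $i\in I$ and invoking \eqref{d:V} then yields \eqref{e:meanP}.

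I expect the only delicate point to be reconciling the generic decomposition $\sum_{j}\mathbb{E}_j$ appearing in \eqref{d:Vi} with the deterministic initial state $i(0)=l$ of the corollary, that is, making precise why every $j\neq l$ term vanishes, together with the passage between the $L^1(E)$-valued Bochner expectation and the scalar pointwise expectation $\mathbb{E}_l(u(t,x))$. Both are handled by the integrability and representation facts established at the close of Section~\ref{s:Dspace}, so the argument is essentially a bookkeeping verification layered on top of Theorem~\ref{th1} rather than a new analytic estimate.
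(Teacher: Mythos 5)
Your proposal is correct and matches the paper's approach: the paper states Corollary~\ref{c:cor2} with no separate proof, treating it as immediate from Theorem~\ref{th1} and the definitions, and your verification --- collapsing the sum in \eqref{theorem1a} by linearity since $f_j=0$ for $j\neq l$, collapsing the sum in \eqref{d:Vi} because the chain starts at $l$, and passing from the Bochner expectation $\mathbb{E}_l(\mathbf{1}_{\{i(t)=i\}}U(t)g)$ to the pointwise expectation $\mathbb{E}_l(\mathbf{1}_{\{i(t)=i\}}u(t,x))$ via the representation fact at the end of Section~\ref{s:Dspace} --- is exactly the intended bookkeeping. The delicate point you flag is genuine but you resolve it the right way: $\mathbb{E}_j$ in \eqref{d:Vi} must be read as the (unnormalized) expectation restricted to the event $\{i(0)=j\}$, so that $\sum_{j}\mathbb{E}_j=\mathbb{E}$ under the actual law of the process, and then the terms with $j\neq l$ vanish precisely because $\mathbb{P}(i(0)=j)=0$.
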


\section{Mean of the process at large time}\label{s:examples}

We consider the relationship between Fokker-Planck type systems \eqref{e:FPE1}
 for a distribution of processes in $\mathbb{R}^d$ space, and
the first moment equation \eqref{l:lawley}. The latter has the same form as \eqref{e:FPE} and $P(t)f $ is the solution of
the evolution equation \eqref{e:FPE1} with initial condition $f $.
Hence, if $(u_0,u_1,u_2,...,u_k)$ is a solution of \eqref{e:FPE}
and for each $i \in I$ there exists $f_i^* \in L^1(E)$ such that
\begin{equation}\label{e:con1}
 \lim_{t \rightarrow \infty}  \int_{E}\left| u_i(t,x)- f_i^*(x) \right| m(dx)=0,
\end{equation}
then, by Corollary~\ref{c:cor2}, we have
\begin{equation*}
\lim_{t \rightarrow \infty} \int_{E} \left|V_i(t,x)- f_i^*(x) \right| m(dx)=0
\end{equation*}
and, by equation \eqref{d:V},
\begin{equation}\label{e:con33}
\lim_{t \rightarrow \infty} \int_{E} \left| V(t,x)- V^*(x) \right| m(dx)=0
\end{equation}
where
$
  V^*(x)=\sum_{i \in I} f_i^*(x).
$
 The function $V^*$ is called the \emph{mean of the process at large time}.
In particular, condition \eqref{e:con1} holds if the semigroup $\{P(t)\}_{t\ge 0}$  is \emph{asymptotically stable}, i.e. there exists $f^* \in L^1(E\times I)$
such that for each density $f\in L^1(E\times I) $
\begin{equation}\label{e:ast}
\lim_{t \to \infty} \| P(t)f  - f^* \| =0
\end{equation}
Note that  $f^*$  in \eqref{e:ast} is an \emph{invariant density} for $\{P(t)\}_{t\ge 0}$, i.e. $P(t)f^*=f^*$ for all $t\ge 0$.

On the other hand, if the semigroup $\{P(t)\}_{t\ge 0}$ is \emph{sweeping from compact subsets of $E\times I$}, i.e. for each compact subset $F$ of $E$, any $f\in L^1(E\times I)$ and $i\in I$ we have
\[
\lim_{t\to \infty}\int_{F} (P(t)f)_i(x)m(dx)=0,
\]
then  the mean of the process in \eqref{def:process} at large time is equal to zero, since for any compact subset $F$ of $E$ we have
\begin{equation}
\lim_{t\to \infty}\int_{F} V(t,x)m(dx)=0.
\end{equation}

We now provide sufficient conditions  for asymptotic behaviour of the stochastic semigroup $\{P(t)\}_{t\ge 0}$ induced by the randomly switching dynamics $\xi(t)=(x(t),i(t))$ with $x(t)$ satisfying \eqref{d:xit} as described in Section~\ref{sec:rand-switch-dynam} with $E\subseteq \mathbb{R}^d$. We follow  the work of \cite{bakhtinhurth12,benaim12} and \cite{rudnickityran15,rudnickityran17}.

Recall that the Lie bracket of two sufficiently smooth vector fields $b_i$ and $b_j$ is defined by
\[
[b_i,b_j](x)=Db_j(x)b_i(x)-Db_i(x)b_j(x)
\]
where $Db(x)$ is the derivative of the vector field $b$ at point $x$. Given vector fields $b_0,\ldots,b_k$  sufficiently smooth in a neighbourhood of $x$ we say that  \emph{H\"ormander's condition holds at} $x$ if the vectors
 \[
b_1(x)-b_0(x), \dots, b_k(x)-b_0(x),\,
[b_i,b_j](x)_{0\leq i,j\leq k},\,
[b_i,[b_j,b_l]](x)_{0\leq i,j,l\leq k},\dots
\]
span the space $\mathbb{R}^d$. This condition is called
the hypo-ellipticity condition A in \cite{bakhtinhurth12} and the strong bracket condition in \cite{benaim12}.

From \cite[Theorem 2]{bakhtinhurth12} (see also \cite[Theorem 4.4]{benaim12}) and \cite[Corollary 5.3]{rudnickityran17} we obtain the following
\begin{corollary}
Suppose that H\"ormander's condition holds at every $x\in E$. If the semigroup $\{P(t)\}_{t\ge 0}$ has no invariant density, then it is sweeping from compact subsets of $E\times I$.
\end{corollary}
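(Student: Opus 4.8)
The plan is to derive the statement as a Foguel-type dichotomy, so that the whole argument reduces to checking the hypotheses under which a stochastic semigroup is forced to be either asymptotically stable or sweeping from compact sets. Concretely, I would first recall the general result behind \cite[Corollary 5.3]{rudnickityran17}: a stochastic semigroup on $L^1(E\times I)$ that is \emph{partially integral} (its transition kernel dominates a nontrivial absolutely continuous component) and is suitably irreducible obeys the Foguel alternative. Under this dichotomy the existence of an invariant density is equivalent to asymptotic stability, and its failure forces the semigroup to sweep all its mass off every compact set. Thus, once partial integrality is in place, the hypothesis that $\{P(t)\}_{t\ge 0}$ has no invariant density immediately rules out the stable branch and yields sweeping from compact subsets of $E\times I$.

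The decisive step is therefore to verify that $\{P(t)\}_{t\ge 0}$, the semigroup associated with the switching process $\xi(t)=(x(t),i(t))$ of Section~\ref{sec:rand-switch-dynam}, is partially integral, and this is exactly where H\"ormander's condition enters. I would invoke \cite[Theorem 2]{bakhtinhurth12} (equivalently \cite[Theorem 4.4]{benaim12}): the assumption that the hypo-ellipticity/strong bracket condition holds at every $x\in E$ guarantees that, for $t$ large enough, the transition probabilities of the PDMP are absolutely continuous with respect to the reference measure $\mu$ on a nonempty open subset of $E\times I$. Translated into operator language, this says that some $P(t)$ dominates an integral operator with a nontrivial kernel, i.e. $\{P(t)\}_{t\ge 0}$ is partially integral. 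Combined with the continuity and minorization furnished by the flow maps $\pi_i$ (cf.\ Corollary~\ref{c:1}), this places the semigroup squarely within the class to which \cite[Corollary 5.3]{rudnickityran17} applies.

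With both ingredients assembled the conclusion is immediate: by the dichotomy the semigroup is either asymptotically stable, hence possessing an invariant density, or sweeping from compact subsets of $E\times I$; since by assumption no invariant density exists, only the latter can hold, which reads $\lim_{t\to\infty}\int_F (P(t)f)_i(x)\,m(dx)=0$ for every compact $F\subseteq E$, every $f\in L^1(E\times I)$ and every $i\in I$. The main obstacle is not this final deduction but the second step: converting the geometric bracket-spanning condition on the vector fields $b_0,\dots,b_k$ into the analytic statement that the transition kernel has a nontrivial absolutely continuous part. That hypoellipticity-to-density passage is precisely what the cited theorems of Bakhtin--Hurth and Bena\"im et al.\ supply, so the proof is in essence a careful verification that their standing assumptions hold in the present $L^1$ setting, followed by an appeal to the Foguel alternative.
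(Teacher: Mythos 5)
Your proposal is correct and follows essentially the same route as the paper: the paper obtains this corollary precisely by combining \cite[Theorem 2]{bakhtinhurth12} (equivalently \cite[Theorem 4.4]{benaim12}), which converts the everywhere-valid H\"ormander condition into the kernel/partial-integrality hypothesis, with the Foguel-alternative result \cite[Corollary 5.3]{rudnickityran17}, so that the absence of an invariant density forces sweeping from compact subsets of $E\times I$. Your write-up simply makes explicit the same two-step citation argument that the paper leaves implicit.
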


A point $x\in E$ is called \emph{reachable from} $y$ if we can find  $n\in \mathbb{N}$, indices $i_1,\ldots,i_n\in I$ and times $s_1,\ldots s_n$ such that $x=\pi_{i_n}(s_n,\ldots,\pi_{i_1}(s_1,y))$, where for each $i$ the function  $t\mapsto \pi_i(t,x_0)$ is the solution of \eqref{e:bi} with initial condition $x(0)=x_0$. Finally, a point $x$ is called \emph{accessible from} $y$ if each neighborhood  of $x$ contains a point reachable from~$y$. Now, combining  \cite[Theorem 1]{bakhtinhurth12} with \cite[Threorem 4.6]{benaim12} we have

\begin{corollary}\label{c:coras}
Suppose that the semigroup $\{P(t)\}_{t\ge 0}$ has an invariant density.  If   H\"ormander's condition holds at a point $x\in E$ that is accessible from any point in $E$ then the semigroup $\{P(t)\}_{t\ge 0}$ is asymptotically stable.
\end{corollary}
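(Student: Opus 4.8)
The plan is to treat Corollary~\ref{c:coras} as a genuine corollary: the two cited results already carry out the hard analytic work—uniqueness of the invariant measure and convergence of the process—so the task is not to reprove convergence but to verify that their hypotheses hold in our framework and then to transport their probabilistic conclusion into the $L^1$-asymptotic-stability statement \eqref{e:ast} for the semigroup $\{P(t)\}_{t\ge 0}$ on $L^1(E\times I)$.

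First I would recall the duality established in Section~\ref{sec:rand-switch-dynam} between the randomly switching process $\xi(t)=(x(t),i(t))$ of \eqref{def:process}, with $x(t)$ governed by \eqref{d:xit}, and the stochastic semigroup $\{P(t)\}_{t\ge 0}$ of \eqref{theorem1a}: if $\xi(0)$ has density $f\in L^1(E\times I)$, then $\xi(t)$ has density $P(t)f$. Under this correspondence an invariant density $f^*$ of $\{P(t)\}_{t\ge 0}$ is exactly the density, with respect to $\mu$, of an absolutely continuous invariant probability measure $\nu^*$ for the Markov process $\xi(t)$. Hence the standing hypothesis that $\{P(t)\}_{t\ge 0}$ possesses an invariant density furnishes one of the two inputs required by \cite[Theorem 1]{bakhtinhurth12} and \cite[Theorem~4.6]{benaim12}, namely the existence of an invariant measure for $\xi(t)$.

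Next I would check that the remaining hypotheses of those theorems hold verbatim here. The vector fields $b_0,\dots,b_k$ are assumed sufficiently smooth, so the Lie brackets entering H\"ormander's condition are well defined, and by assumption this strong bracket (hypo-ellipticity) condition holds at the distinguished point $x\in E$; moreover $x$ is accessible from every point of $E$ in the reachability sense recalled above, which is precisely the accessibility assumption of the cited results. Invoking \cite[Theorem 1]{bakhtinhurth12} together with \cite[Theorem~4.6]{benaim12} then yields that $\nu^*$ is the unique invariant measure and that the law of $\xi(t)$ converges to $\nu^*$ in total variation for every initial distribution. Finally I would translate this back through the duality: for an arbitrary density $f\in L^1(E\times I)$, total-variation convergence of the law of $\xi(t)$ started from $f$ is the statement that $\tfrac12\|P(t)f-f^*\|\to 0$ as $t\to\infty$, which is exactly \eqref{e:ast}, and since $f^*$ is invariant this is asymptotic stability of $\{P(t)\}_{t\ge 0}$, as asserted.

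The main obstacle I anticipate is not the logical chain but the bookkeeping at the interface between the two formalisms. I must confirm that the notions of accessibility, reachability, and the support/regularity conditions used in \cite{bakhtinhurth12} and \cite{benaim12} coincide with those stated here—in particular that accessibility from \emph{every} point of $E$, rather than merely from the support of $\nu^*$, is the hypothesis those theorems actually require—and that their convergence conclusion is phrased for transition probabilities in a form equivalent to $L^1$-convergence of the densities $P(t)f$, not merely weak convergence. Once that equivalence is pinned down, the correspondence $\mathcal{L}(\xi(t))\leftrightarrow P(t)f$ closes the argument with no further computation.
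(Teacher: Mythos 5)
Your proposal follows essentially the same route as the paper, which states Corollary~\ref{c:coras} as a direct consequence of combining \cite[Theorem 1]{bakhtinhurth12} with \cite[Theorem 4.6]{benaim12}, relying on exactly the correspondence you describe between invariant densities of $\{P(t)\}_{t\ge 0}$ and invariant measures of the process $\xi(t)$, and between total-variation convergence of laws and $L^1$-convergence of densities. Your write-up merely makes explicit the hypothesis-matching and the duality bookkeeping that the paper leaves implicit, so there is no substantive difference in approach.
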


We illustrate the behaviour of the mean of the process at large time
 for some simple examples exhibiting bifurcations in their trajectory dynamics \cite{kuznetsov2013elements}.  Note that in \cite{lawley18} direct bifurcations of the heat equation with randomly switching boundary conditions were studied.   We will use the results from
\cite{hurth} and \cite{rudnickityran15}, and we start by recalling some notions from them.

We consider $E\subset (0,\infty)$  and $I=\{0,1\}$ so that we have a switching between $b_0$ and $b_1$ leading to a Markov process $\xi (t)=(x(t),i(t))$, $t \ge 0$ on the state space $E \times \{0,1\}$.
By $q_i=q_{i(1-i)}$, $i=0,1$, (see \eqref{d:qi}) we denote constant positive intensities of switching  from $b_i$ to $b_{1-i}$.
Additionally we assume that $b_0(0)=b_1(0)=0$ and that either $b_0$ or $b_1$ has  one more stationary point $a$  that is accessible from any point in  $E$.  H\"ormander's condition holds at $x$ if $b_1(x)-b_0(x)\neq 0$.
Let
\begin{equation*}
r(x)=\frac{q_{0}}{b_0(x)}+\frac{q_{1}}{b_1(x)}\quad  \text{and}\quad R(x)= \int_{x_0}^x r(s) ds,
\end{equation*}
where $x_0 \in (0,a)$. Then the functions given by
\begin{equation*}
f_0(x)=\frac{e^{-R(x)}}{|b_0(x)|}\quad  \text{and}\quad  f_1(x)=\frac{e^{-R(x)}}{|b_1(x)|}
\end{equation*}
are stationary solutions of the corresponding Fokker-Planck equation \eqref{e:FPE}. Now if
\begin{equation}\label{eq:denn3}
\kappa=\int_{0}^a \big(f_0(x)+f_1(x)\big) dx < \infty,
\end{equation}
then the semigroup $\{P(t)\}_{t\ge 0}$ is asymptotically stable and the mean at large time is given by
\begin{equation}\label{e:Vst}
  V^*(x)=\kappa^{-1}( f_0(x) +f_1(x))\mathbf{1}_{(0,a)}(x).
\end{equation}
If $b_0'(0)b_1'(0)\neq 0$ then
 condition \eqref{eq:denn3}
holds for $\lambda>0$ where this parameter depends on the form of the functions $b_0$, $b_1$ and is defined by
\begin{equation}\label{d:lambda}
\lambda=p_0b'_0(0)+p_1 b'_1(0)
\end{equation}
with
\[
p_0=\frac{q_{1}}{q_{0}+q_{1}} \quad \text{and}\quad p_1=\frac{q_{0}}{q_{0}+q_{1}}
\]
representing the probability of choosing the function $b_0$ and $b_1$, respectively.
In the opposite situation with $\lambda<0$ this semigroup is sweeping from the family of all compact subsets of the state space implying that the mean at large time is zero.
The parameter $\lambda$ turns out to be the mean growth rate if the population is small \cite{rudnickityran15}.

\begin{example}[Transcritical bifurcation]\label{ex:transcritical} Transcritical bifurcations appear in many biological models, c.f \cite{boudjellaba2009dynamic,kribs,buonomo2015note,wang2014hopf}, and we thus  re-consider Example \ref{ex:trans}. The functions $b_0$ and $b_1$ are given by \eqref{e:R111} with $\beta_0<\mu$ and $\beta_1>\mu.$ Thus, \eqref{e:bi}
with  $i=0$ has the form $x'=(\beta_0-cx)x - \mu x$ and there are two stationary points, $0$ and $a_0=(\beta_0-\mu)/c$, where the first one is stable and the second is unstable.
However, for $i=1$ the quantitative character of the stationary points of   $x'=(\beta_1-cx)x - \mu x$ is exchanged. That is, $0$ is an unstable stationary point while $a_1=(\beta_1-\mu)/c$ is stable. Hence, we have a transcritical bifurcation. We take $a=a_1$.
Again, we look at the value of the parameter $\lambda$ in \eqref{d:lambda}.
For $\lambda<0$ the mean of the process at large time is $0$, while for $\lambda>0$ the mean is positive and given by the corresponding $V^*$
with $f_i$, $i=0,1$ given, up to a multiplicative constant, by
 \begin{equation}\label{eq:den3}
f_i(x)=  \dfrac{1}{x|x-a_i|} x^{-(\frac{q_{0}}{ca_0} + \frac{q_{1}}{ca_1})}
(x-a_0)^{\frac{q_{0}}{ca_0}}
(a_1-x)^{\frac{q_{1}}{ca_1}}\mathbf{1}_{(0,a_1)}(x).
\end{equation}
  We illustrate the behavior of the mean $V(t,x)$ as in \eqref{e:meanP} for chosen times and  parameters in  Figures \ref{fig1} and \ref{fig2a}.  Figure \ref{fig1} shows convergence of $V$ to the mean at large time $V^*$ for $\lambda>0$ while Figure \ref{fig2a} presents sweeping to 0  for $\lambda<0$.
\begin{figure}[tb]
\centering
\begin{subfigure}[b]{\figwidth}
\includegraphics[width=\figwidth]{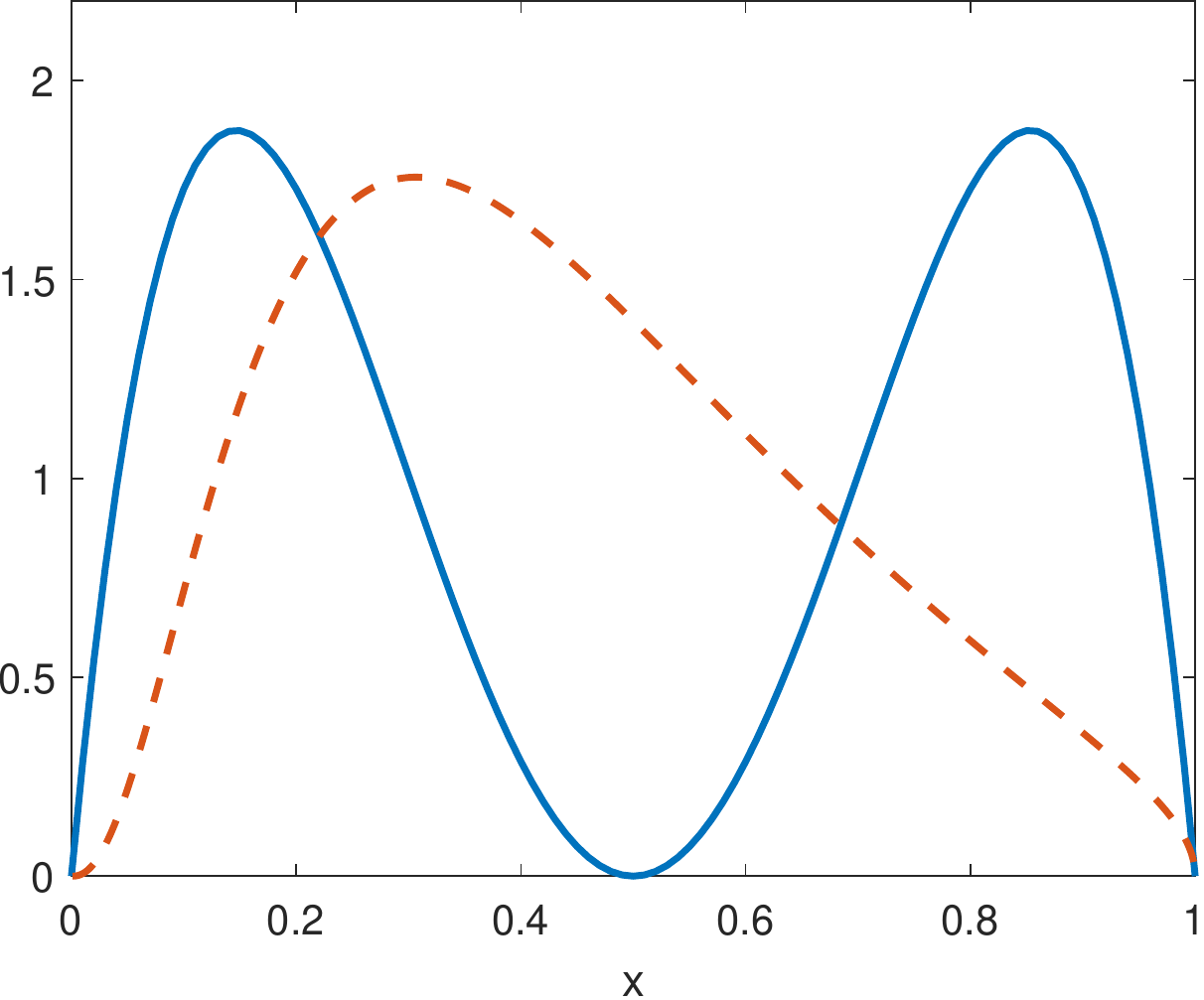}
\caption{}
\end{subfigure}
\begin{subfigure}[b]{\figwidth}
\includegraphics[width=\figwidth]{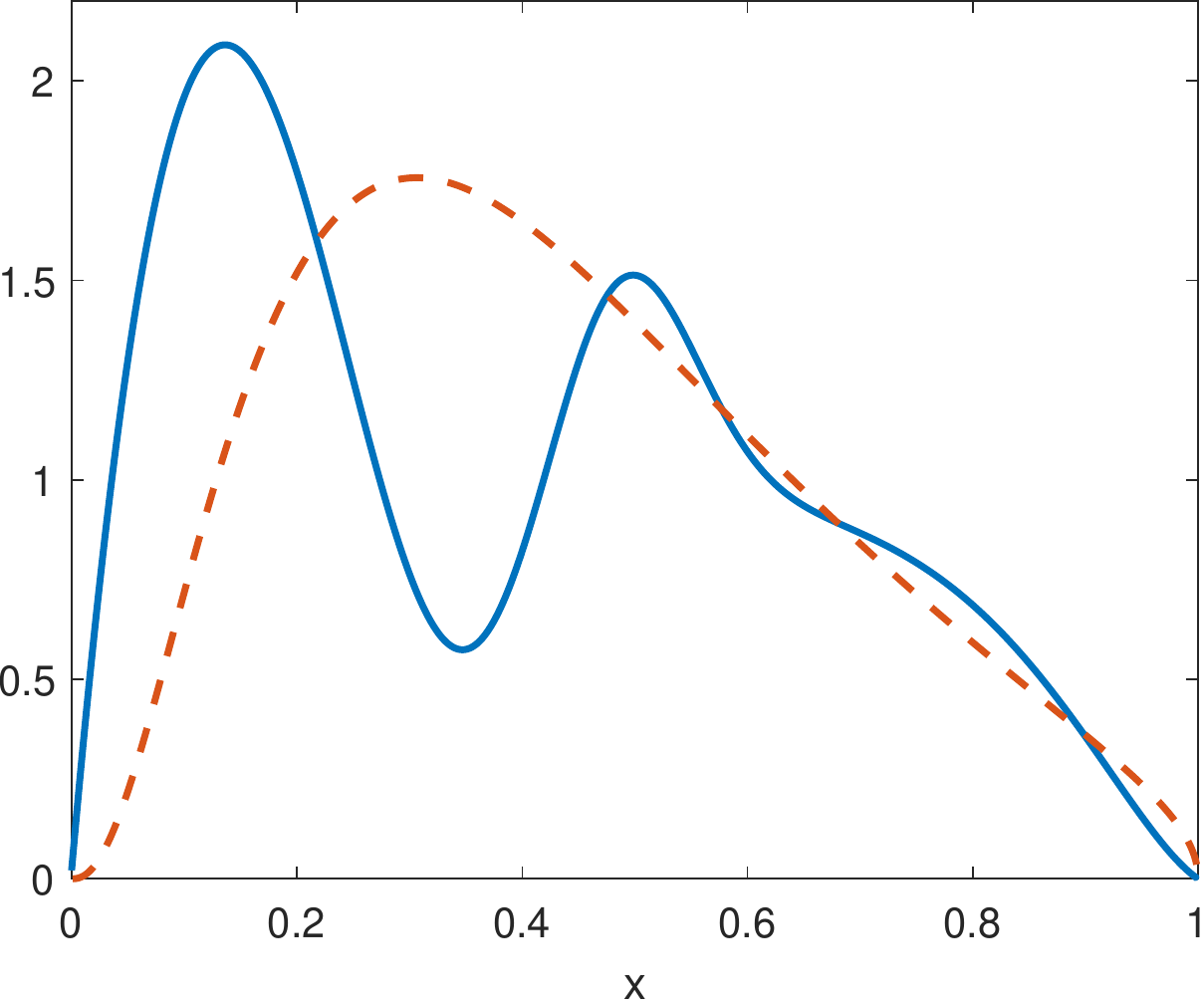}
\caption{}
\end{subfigure}
\begin{subfigure}[b]{\figwidth}
\includegraphics[width=\figwidth]{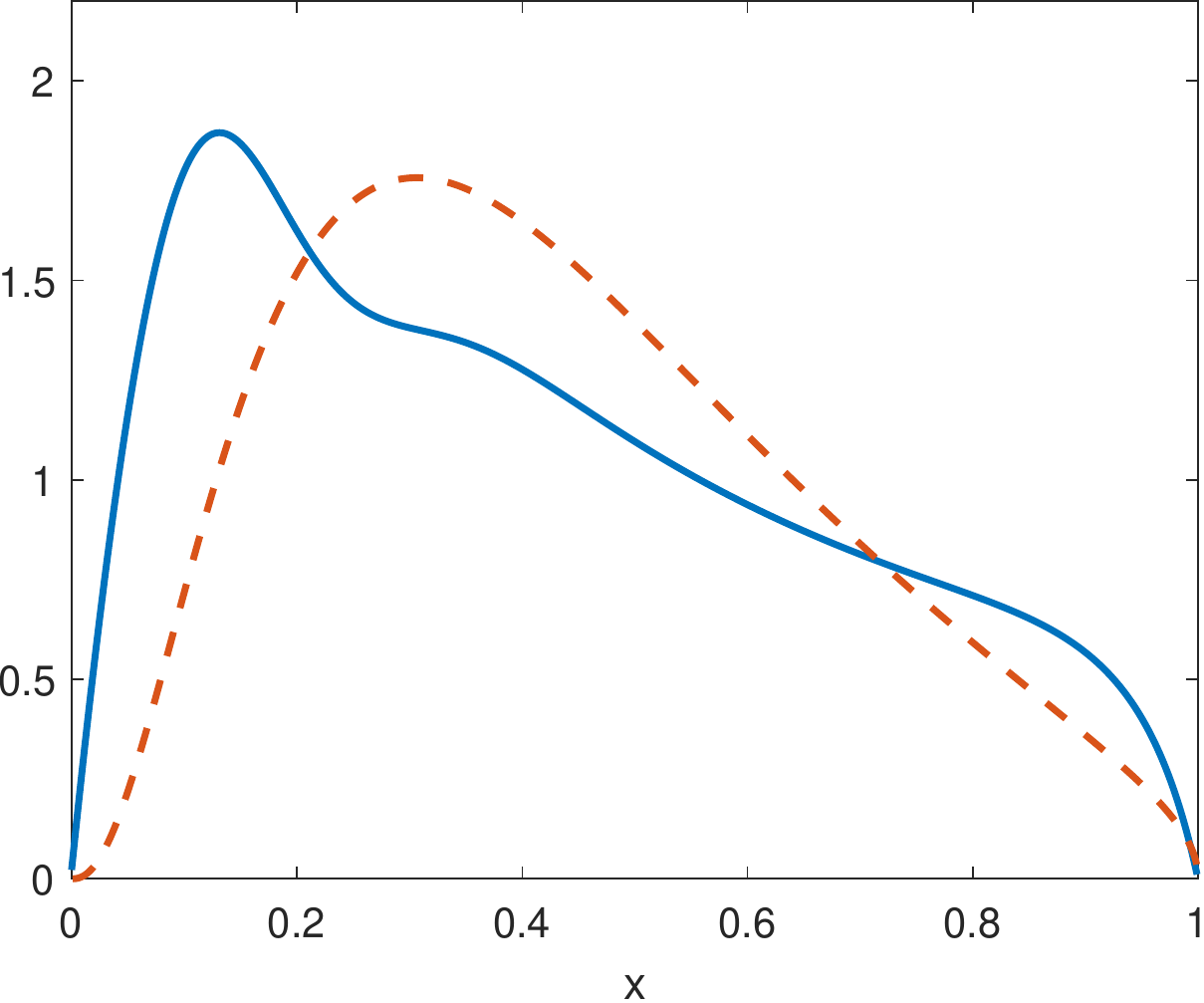}
\caption{}
\end{subfigure}
\begin{subfigure}[b]{\figwidth}
\includegraphics[width=\figwidth]{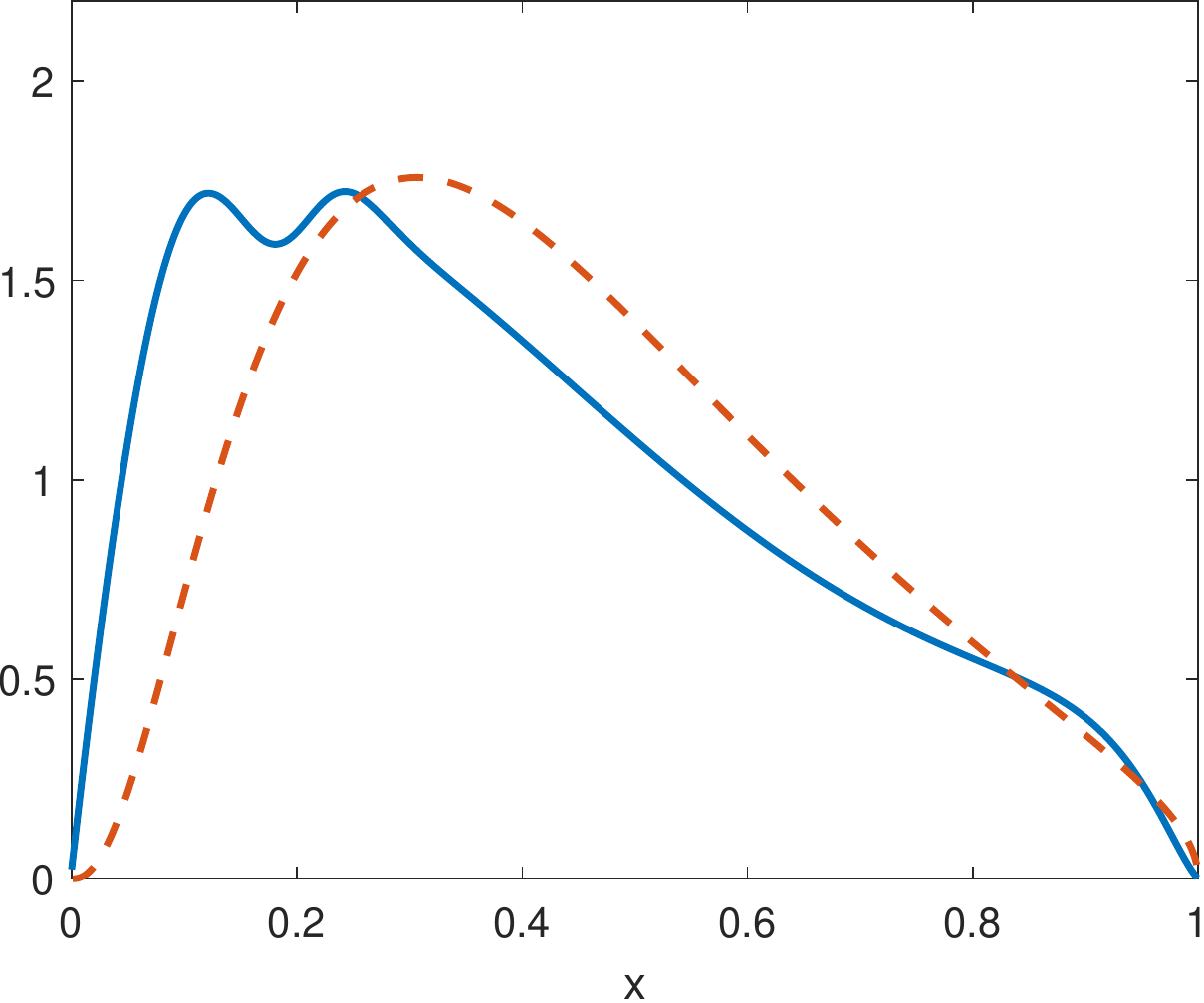}
\caption{}
\end{subfigure}
\begin{subfigure}[b]{\figwidth}
\includegraphics[width=\figwidth]{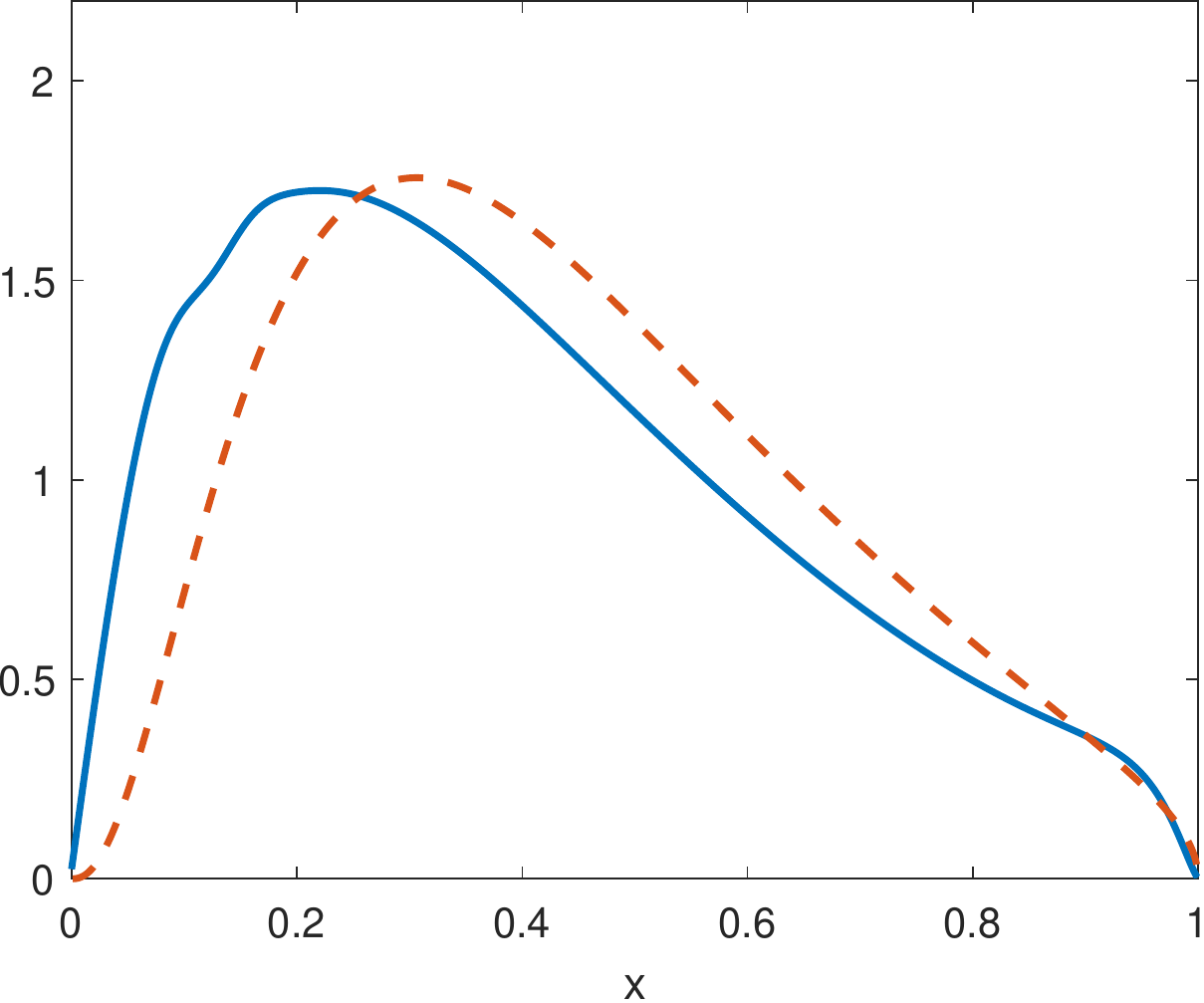}
\caption{}
\end{subfigure}
\begin{subfigure}[b]{\figwidth}
\includegraphics[width=\figwidth]{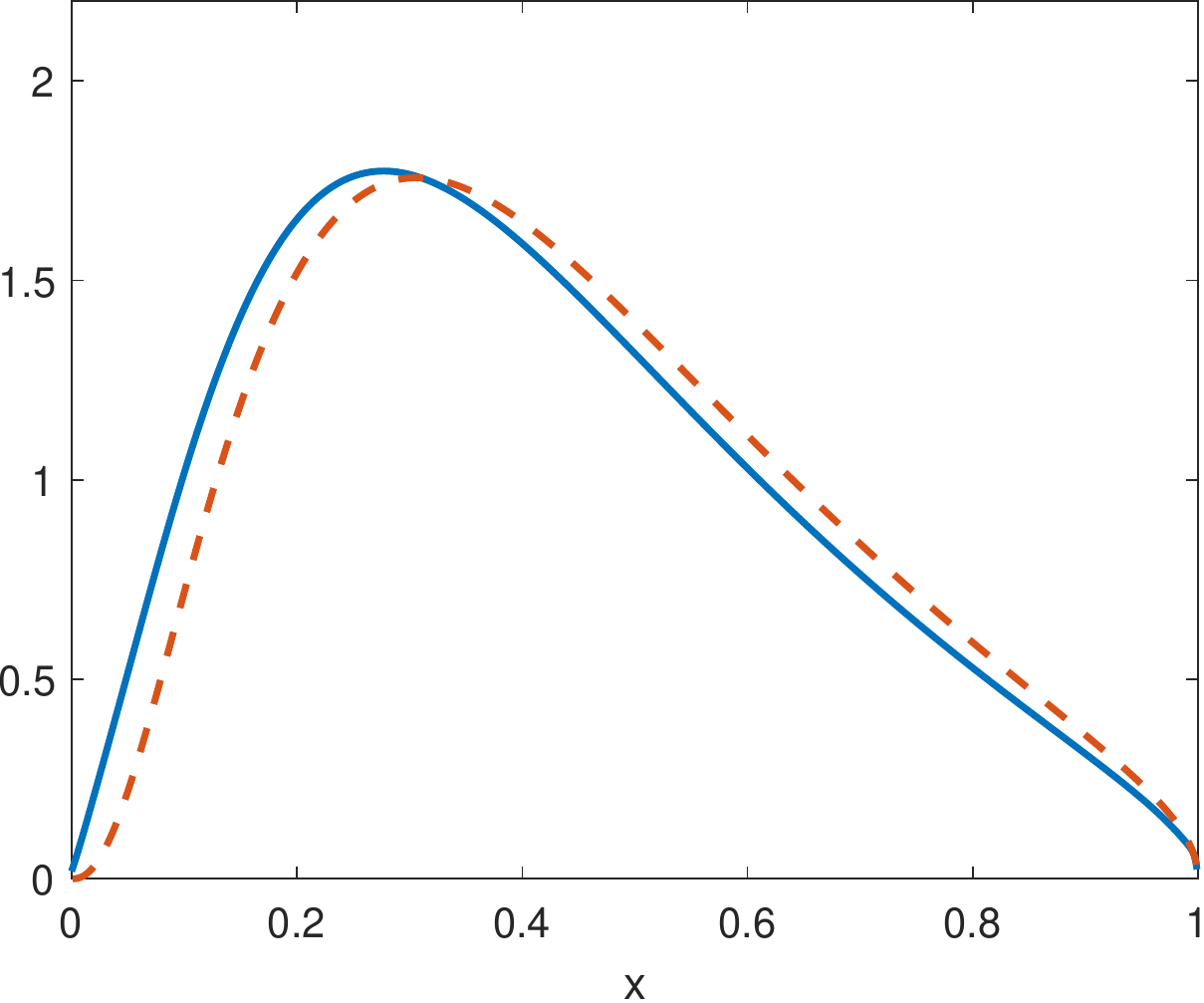}
\caption{}
\end{subfigure}
\caption{Numerical simulations of the mean $V(t,x)$ (solid line) in Example~\ref{ex:transcritical}. The initial density is shown in (a). The dashed line represents the graph of the mean in large time $V^*(x)$. Consecutive times are $t=0.25$ (b), $t=0.5$ (c), $t= 0.7$ (d), $t= 1$ (e), and $t= 2.5$ (f). The values of parameter used in this example are: $q_0=5$, $q_1=3$, $\beta_0=1$, $\beta_1=4$, $c=2$, $\mu=2$\label{fig1}
}
\end{figure}

\end{example}

\begin{figure}[tb]
\centering
\begin{subfigure}[b]{\figwidth}
\includegraphics[width=\figwidth]{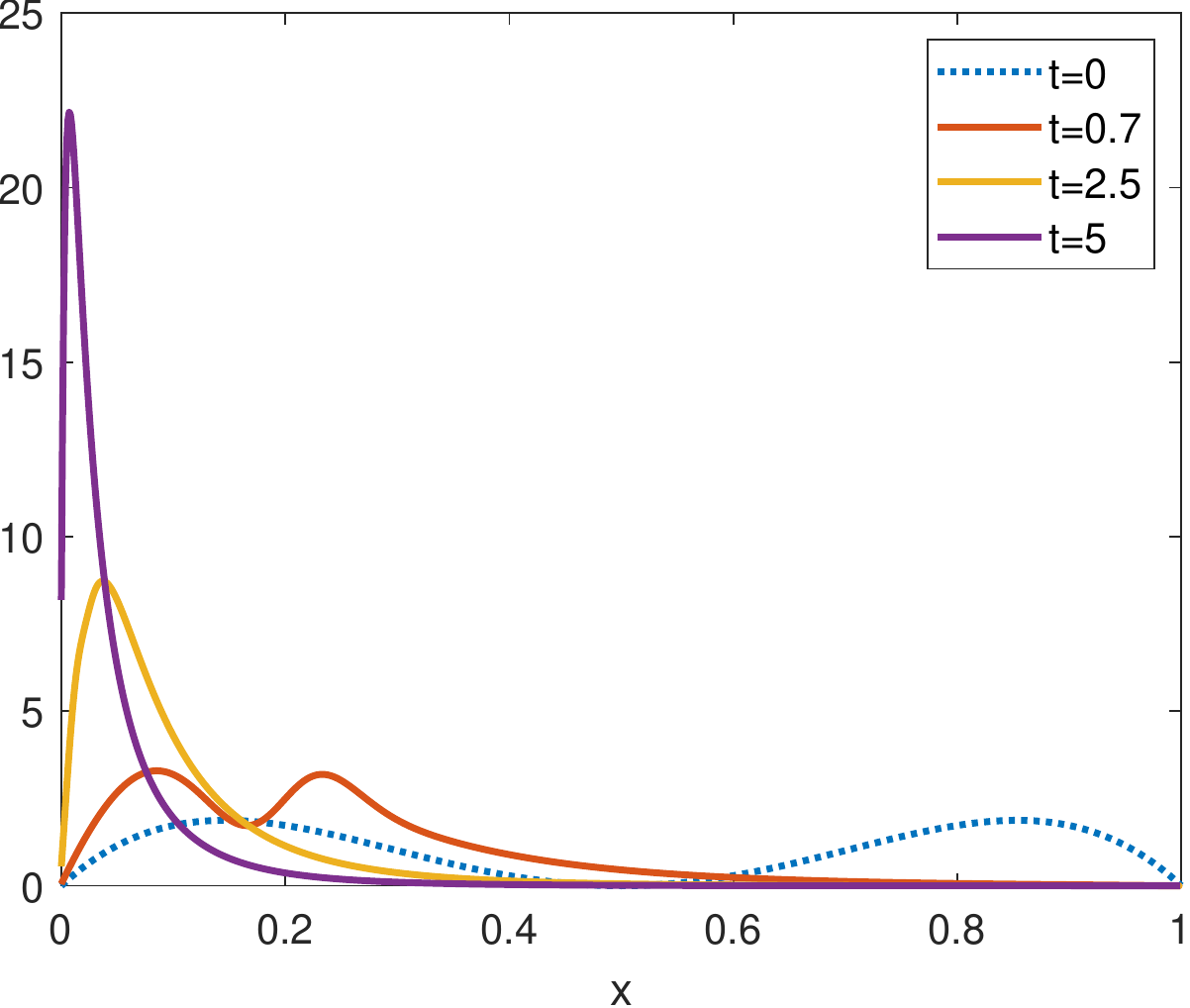}
\caption{\label{fig2a}}
\end{subfigure}
\begin{subfigure}[b]{\figwidth}
\includegraphics[width=\figwidth]{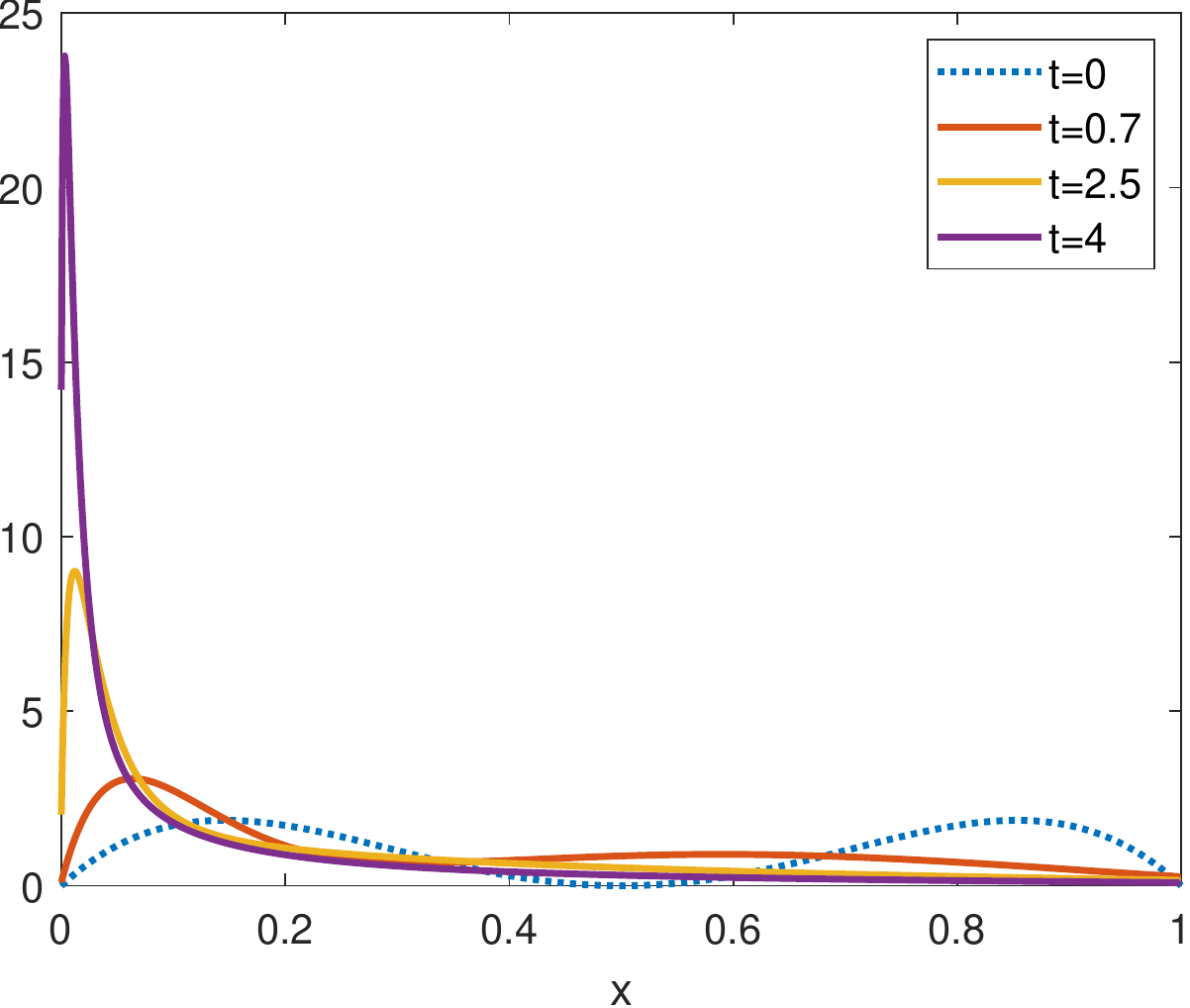}
\caption{\label{fig2b}}
\end{subfigure}
\begin{subfigure}[b]{\figwidth}
\includegraphics[width=\figwidth]{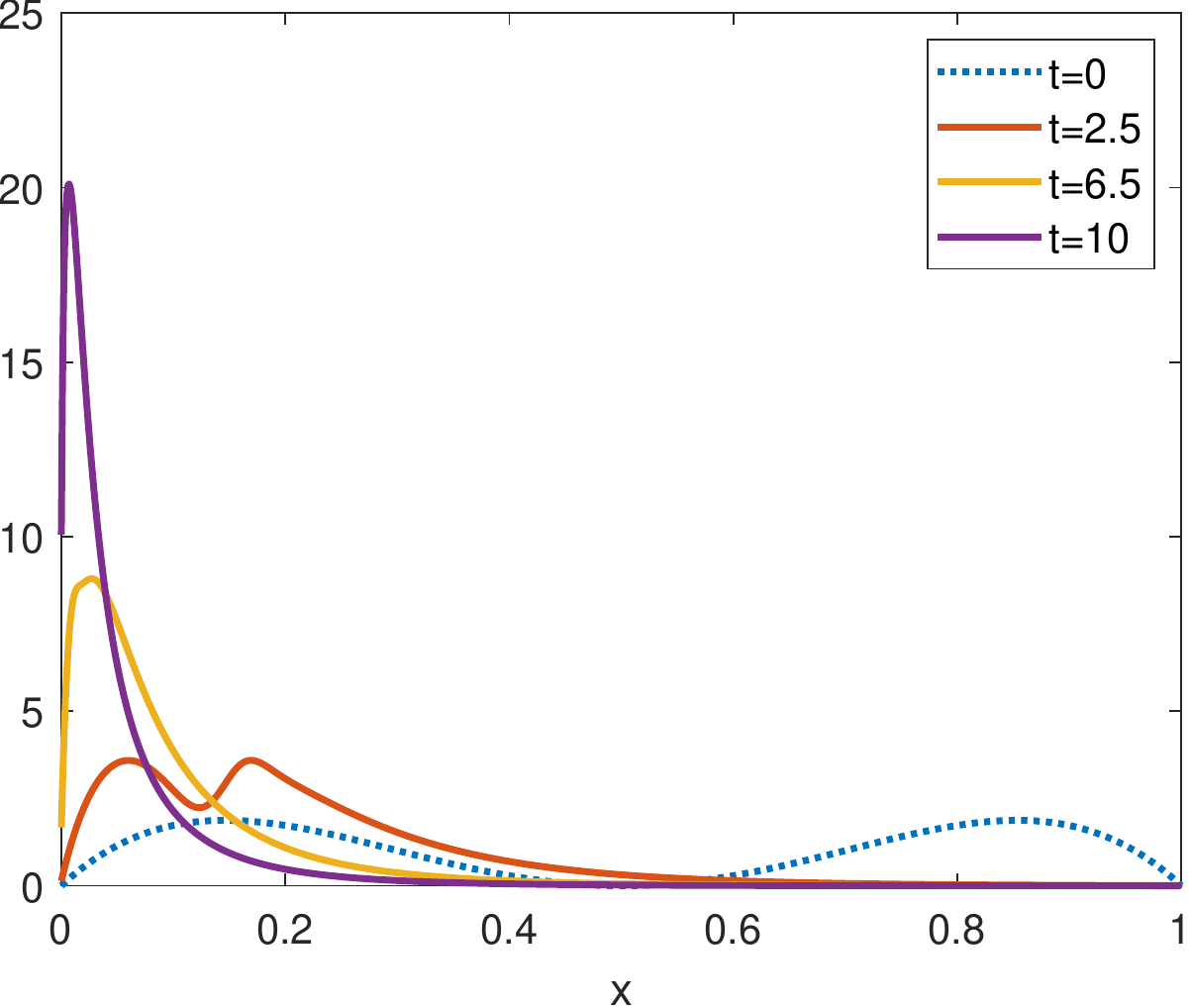}
\caption{ \label{fig2c}}
\end{subfigure}
\caption{Numerical simulations of $V(t,x)$ when the mean at large time is $0$: (a) Example~\ref{ex:transcritical} with parameters:  $q_0=2$, $q_1=6$, $\beta_0=1$, $\beta_1=4$, $c=2$, $\mu=2$, (b) Example~\ref{ex:fold} with parameters:  $q_0=6$, $q_1=2$,$\gamma_0=2$, $\gamma_1=0.25$, (c) Example~\ref{ex:pitch} with parameters:  $q_0=4$, $q_1=2$, $\alpha_0=-0.5$, $\alpha_1=1$ \label{fig2}}
\end{figure}

\begin{example}[Fold bifurcation]  We next go back to the inducible operon model of Example \ref{ex:fold}. Consider the following nonlinear differential equation
\begin{equation}\label{e:F}
x'(t)=\frac{x^n(t)}{1+x^n(t)}-\gamma x(t),
\end{equation}
where $x(t)>0$ denotes the concentration level of protein molecules at time $t$, $\gamma$ is a degradation rate and $n>1$.
It is known (see  \cite{grif}) that if the parameters satisfy the condition
\begin{equation}\label{e:G} n^n \gamma^n > {(n-1)}^{n-1},
\end{equation}
then $0$ is the only stationary point of equation \eqref{e:F} and it is stable. In the opposite case to \eqref{e:G}, there are also two additional stationary points of this equation; one of them is stable and the other one is unstable. Hence, we choose the values of the parameters $\gamma_0$ and $\gamma_1$ in such way that a fold bifurcation occurs. Thus we take $\gamma_0$ such that  $n^n \gamma_0^n > {(n-1)}^{n-1}$ and $\gamma_1$ such that $n^n \gamma_1^n < {(n-1)}^{n-1}$. By using the same type of argument as in the proof of \cite[Theorem 4.2]{hurth} together with the properties of the dynamics $x(t)$, we see that $x(t)$ reaches a neighborhood
of $0$ in finite time and hence we deduce that the semigroup $\{P(t)\}_{t\ge 0}$ is sweeping from the family of all compact subsets of  $(0,+\infty) \times \{0,1\}.$ Consequently, the mean at large time is equal to zero.   This behavior is illustrated in  Figure \ref{fig2b}.
\end{example}

\begin{example}[Pitchfork bifurcation]\label{ex:pitch}
The normal form for a  supercritical pitchfork bifurcation is
\begin{equation}\label{pitch}
x'(t)=\alpha x(t)-x^3(t).
\end{equation}
For $\alpha< 0$ there is a single stationary point $x_*=0$ while for $\alpha > 0$ there is an unstable stationary point at $0$ and two stationary points $x_{\pm}= \pm \sqrt{\alpha}$ that are locally stable. Now let $\alpha_0<0$ and $\alpha_1>0$ be two fixed parameters. We consider equation \eqref{pitch} with $\alpha=\alpha_i$. Thus we have $b_0(x)=\alpha_0 x-x^3$ and $b_1(x)=\alpha_1 x-x^3$.

First we take $E=(0,\infty )$ and $a=\sqrt{\alpha_1}$. Then for positive $\lambda$ from \eqref{d:lambda} we have a positive mean at large time with $f_i$ given by
\begin{equation}\label{pitch2}
f_i(x)=\dfrac{1}{x|\alpha_i-x^2|} x^{-\frac{q_{0}}{\alpha_0}-\frac{q_{1}}{\alpha_1}}(x^2-\alpha_0)^{\frac{q_{0}}{2\alpha_0}}(\alpha_1-x^2)^{\frac{q_{1}}{2\alpha_1}}1_{(0,\sqrt{\alpha_1})}(x),\ i \in \{0,1\},
\end{equation}
while for $\lambda<0$ the mean of the process at large time is equal to $0$. The situation with $E=(-\infty,0)$ is analogous to stationary solutions of the corresponding Fokker-Planck equation given by $f_i(-x),\ x<0$ where this function is as in \eqref{pitch2}.  The behavior of the mean $V(t,x)$ in this example is shown in  Figures \ref{fig3} and \ref{fig2c}.  The convergence of $V$ to the mean at large time $V^*$ for $\lambda>0$ is illustrated in Figure \ref{fig3} while sweeping to 0  for $\lambda<0$ is pres1ented in Figure \ref{fig2c}.

\begin{figure}[tb]
\centering
\begin{subfigure}[b]{\figwidth}
\includegraphics[width=\figwidth]{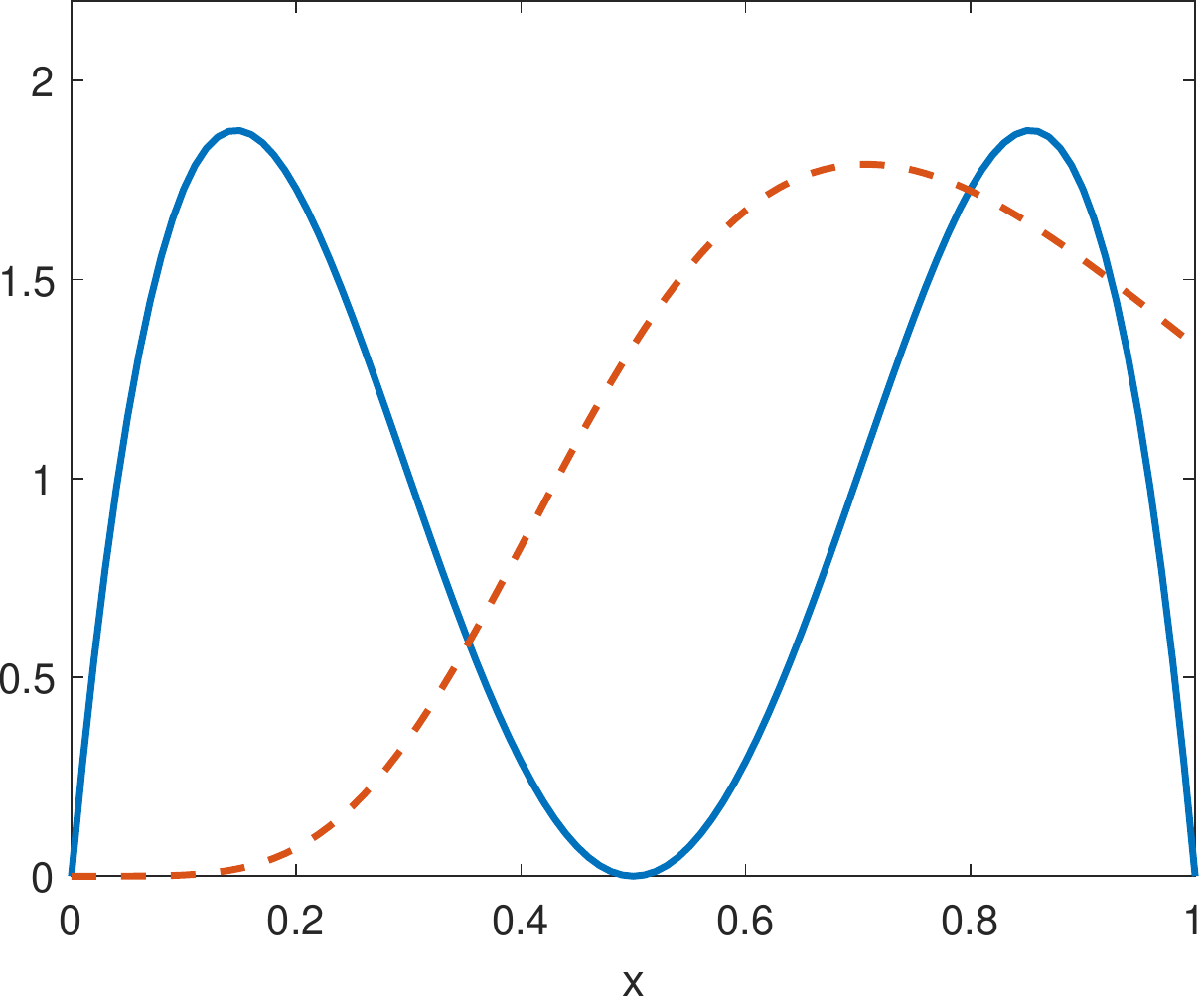}
\caption{}
\end{subfigure}
\begin{subfigure}[b]{\figwidth}
\includegraphics[width=\figwidth]{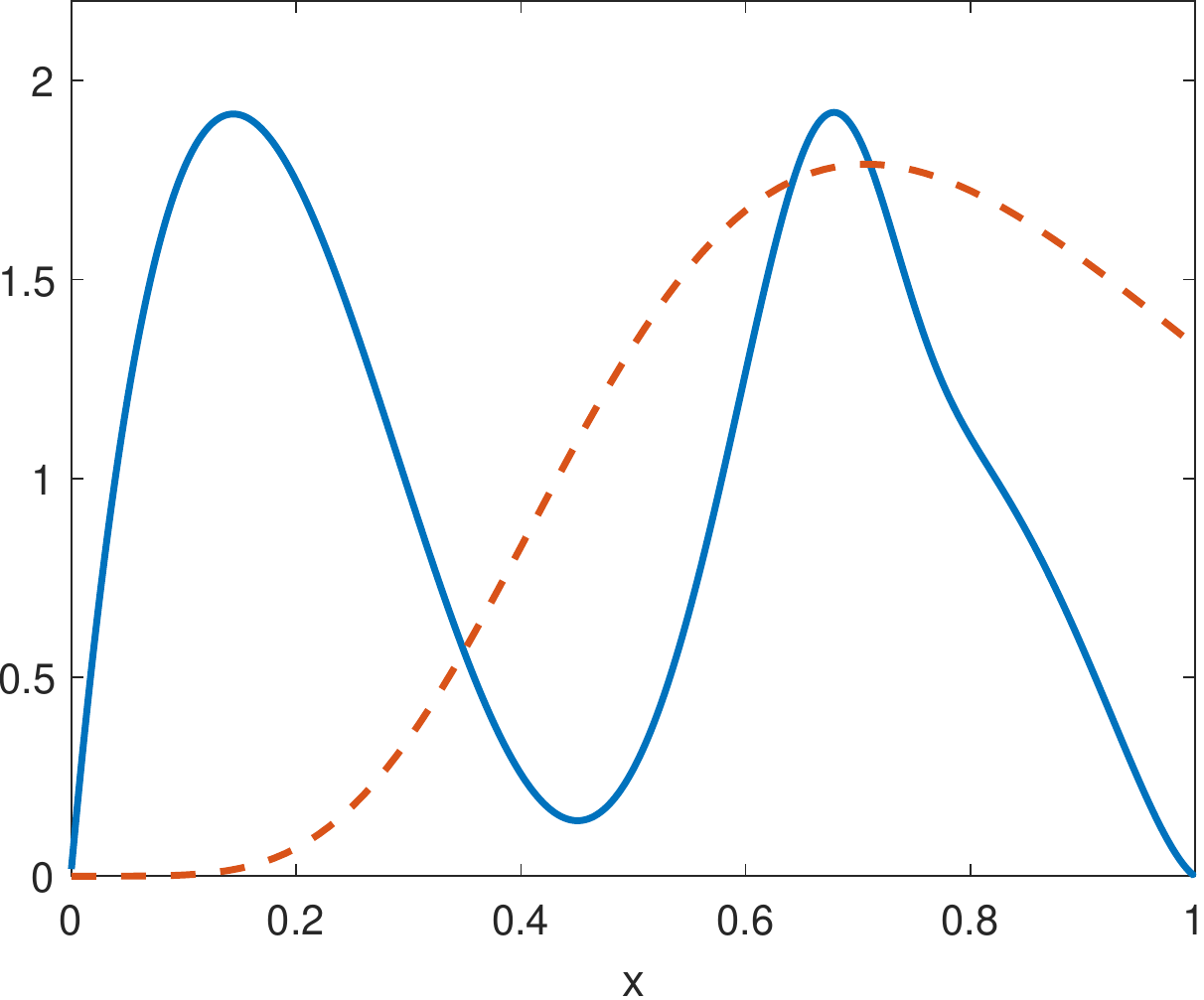}
\caption{}
\end{subfigure}
\begin{subfigure}[b]{\figwidth}
\includegraphics[width=\figwidth]{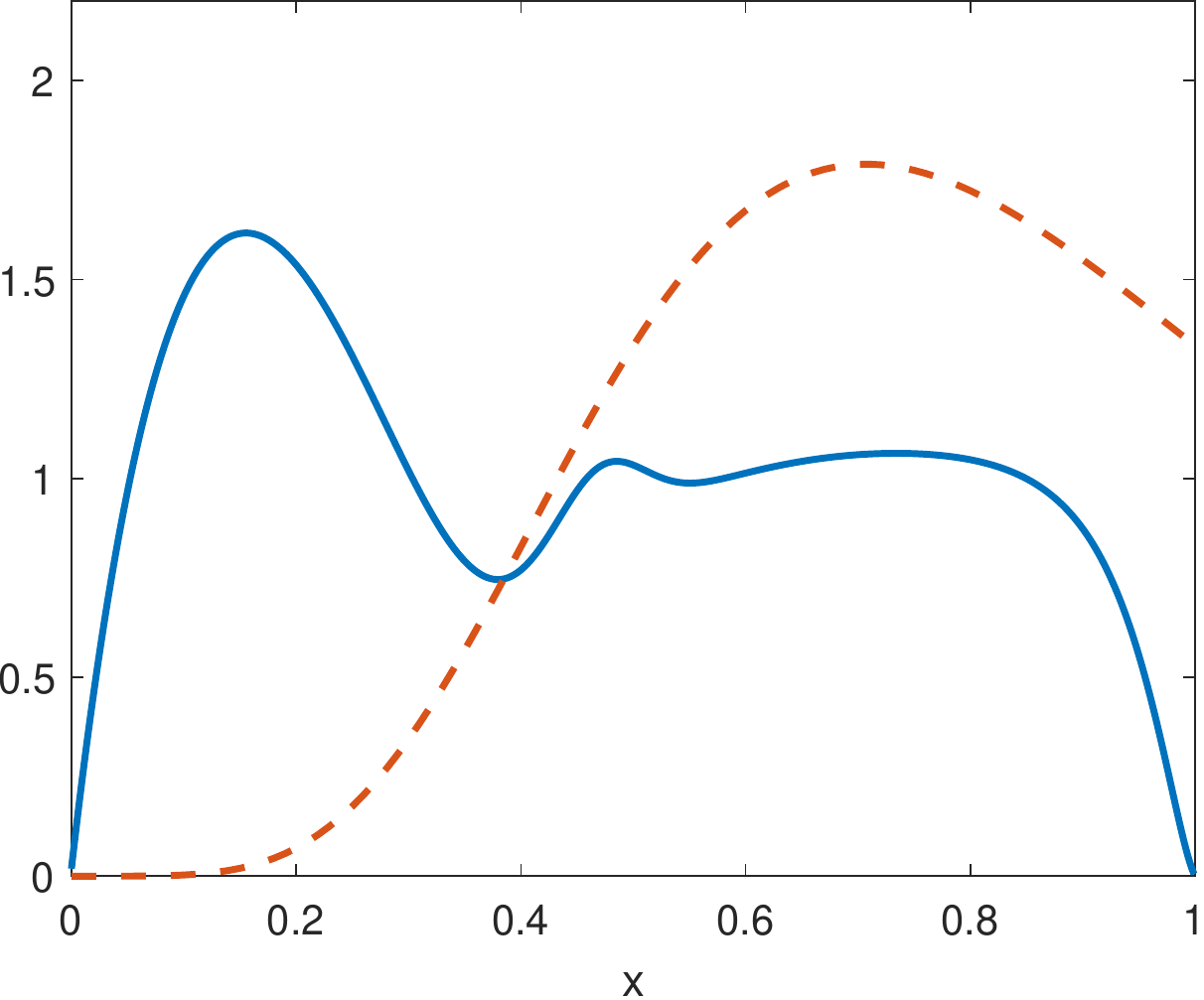}
\caption{}
\end{subfigure}
\begin{subfigure}[b]{\figwidth}
\includegraphics[width=\figwidth]{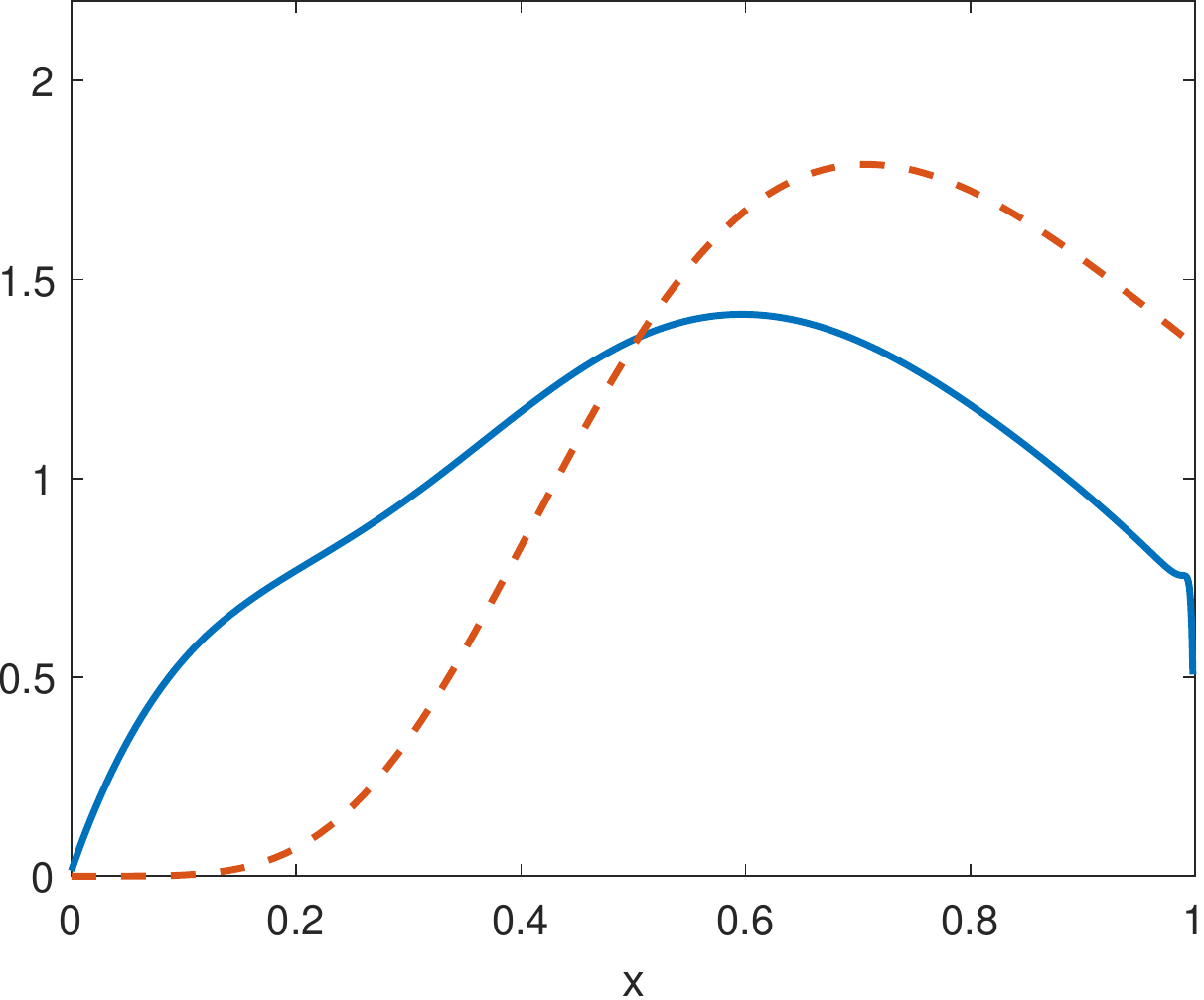}
\caption{}
\end{subfigure}
\begin{subfigure}[b]{\figwidth}
\includegraphics[width=\figwidth]{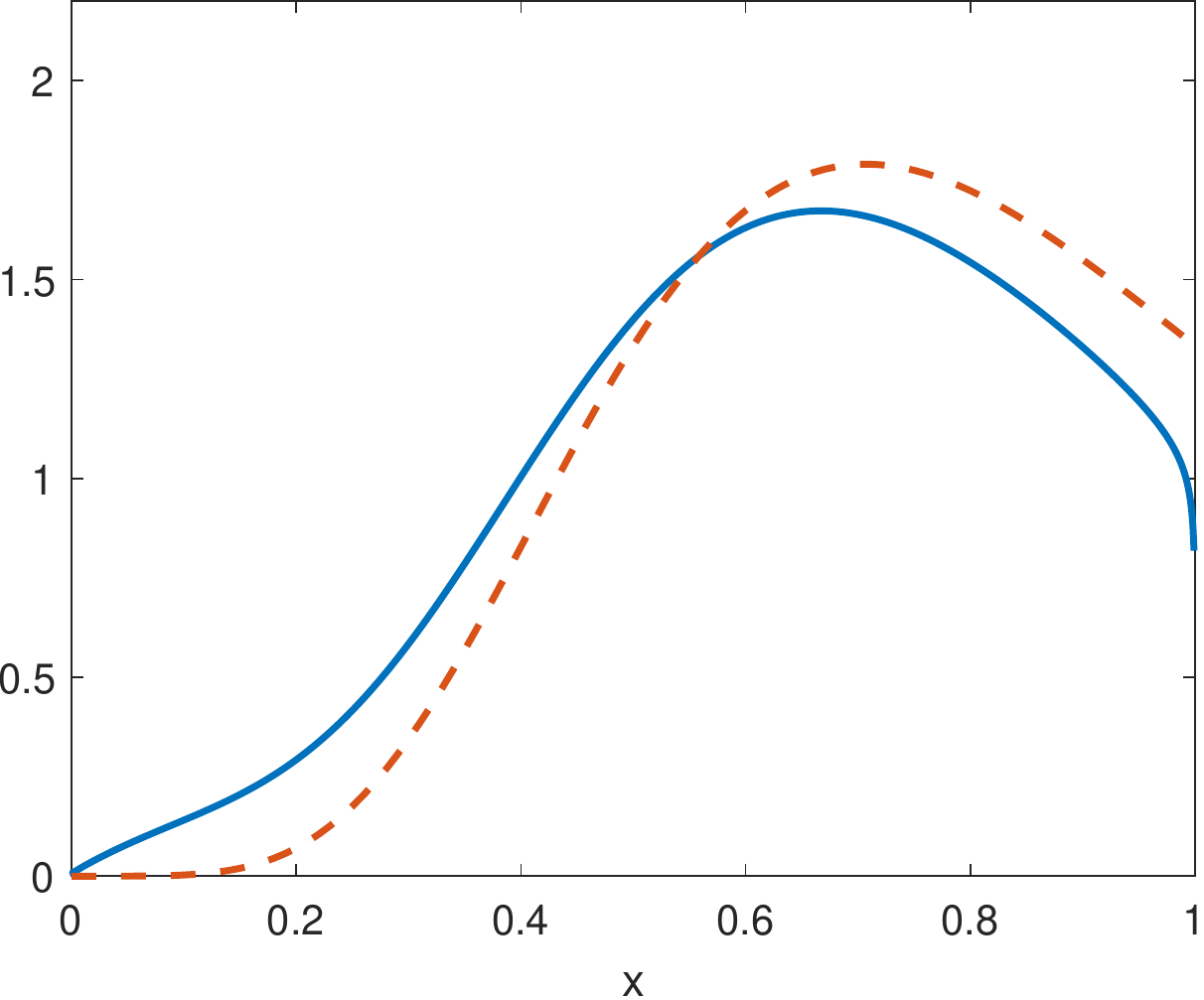}
\caption{}
\end{subfigure}
\begin{subfigure}[b]{\figwidth}
\includegraphics[width=\figwidth]{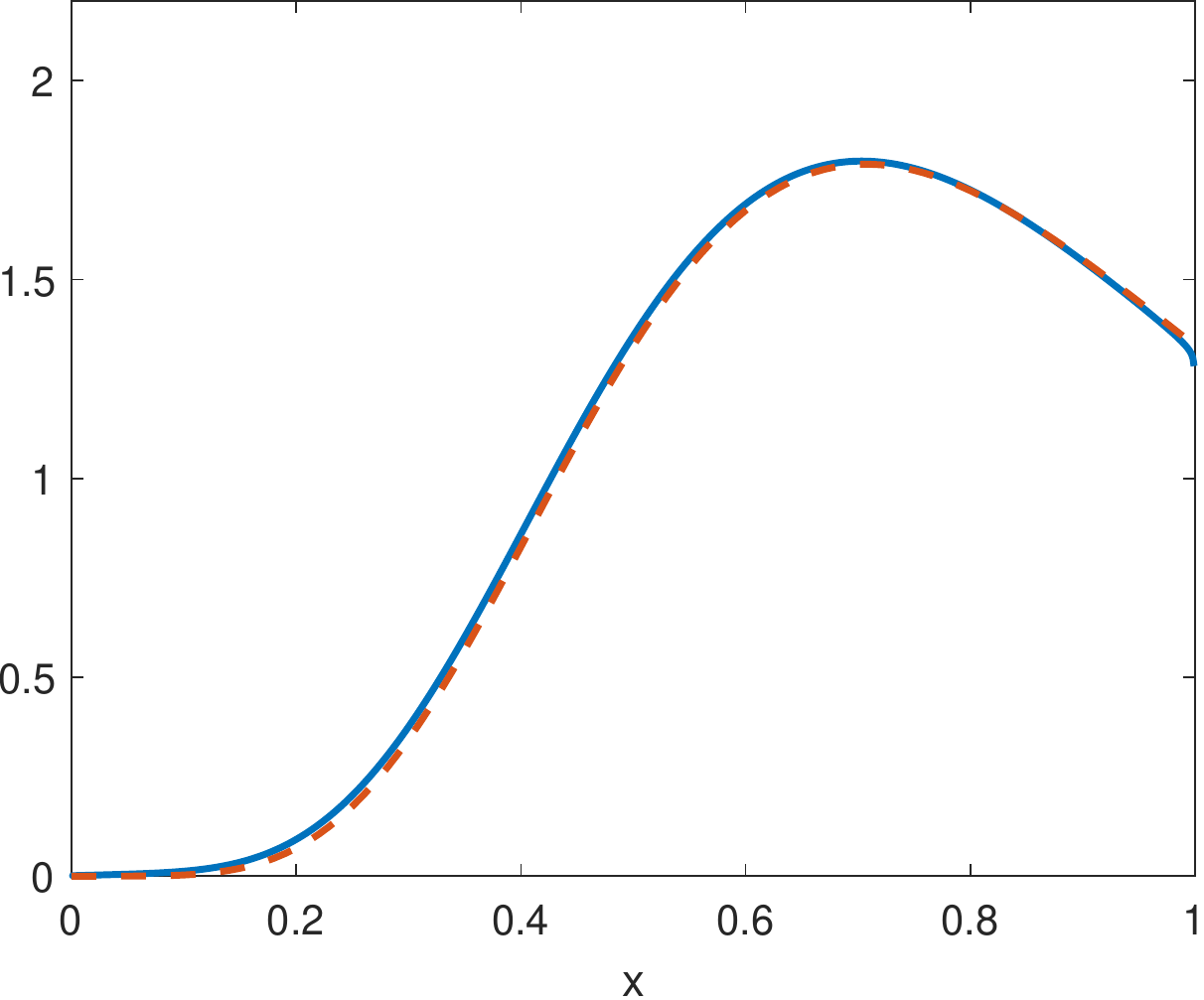}
\caption{}
\end{subfigure}
\caption{In this figure we provide simulations of the mean $V(t,x)$ (solid line) in Example~\ref{ex:pitch} for parameters
$q_0=4$, $q_1=2$, $\alpha_0=-0.5$, $\alpha_1=1$ at times  $t=0$ (a), $t=0.25$ (b), $t=0.7$ (c), $t= 2.5$ (d), $t= 5$ (e), and $t= 10$ (f). The dashed line graph  represents the  mean at large time $V^*(x)$ \label{fig3}
}
\end{figure}
\end{example}

Our last example treats the normal form of a supercritical Hopf bifurcation, see also  \cite{hurth}.

\begin{example}[Hopf bifurcation] Another commonly reported class of models in biology is one which exhibits a Hopf bifurcation.
The normal form for a supercritical Hopf bifurcation, after changing to polar coordinates $(\theta,r)$,
is
\begin{equation}\label{e:Hipf}
\begin{cases}
\theta'(t) = \omega +br^2(t),\\
r'(t) = \mu r(t) - r^3(t).
\end{cases}
\end{equation}
For $\mu < 0$ there is a single steady state $(\theta_*,r_*) = (0,0)$ while for $\mu > 0$ there is an unstable steady state at $(0,0)$ and a co-existing limit cycle with $r = \sqrt{\mu}$. In analogy to the previous cases, we take $\omega=\omega_i$ and $\mu=\mu_i$, $i=0,1$,  in \eqref{e:Hipf} with  $\mu_0<0$ and $\mu_1>0$. Let $E=\mathbb{S}^1\times (0,\infty)$, where $\mathbb{S}^1$ is the unit circle in $\mathbb{R}^2$.  To simplify the analysis we assume that $b=0$. If $\omega_0\neq \omega_1$ then the H\"ormander condition holds at every point $(\theta,r)\in E$. Note that the point $(0,\sqrt{\mu_1})$ is accessible from any point in $E$.  The asymptotic behaviour of the mean given by \eqref{e:meanP}  again depends on the sign of the parameter $\lambda$ in \eqref{d:lambda} with $b_i'(0)=\mu_i $, $i=0,1$. If $\lambda$ is positive then the mean at large time is equal to
  \begin{equation*}
    V^*(\theta,r)=\frac{1}{2\pi\kappa}\big(f_0(r)+f_1(r)\big)\mathbf{1}_{\mathbb{S}^1}(\theta),
  \end{equation*}
where $f_i$ has the form as in \eqref{pitch2} with $\alpha_i=\mu_i$.
On the other hand for $\lambda<0$ the mean of the process at large time is zero.

If $\omega=\omega_0= \omega_1$ then the angular variable $\theta$ is independent of the radial variable $r$ and it satisfies the same equation $\theta'(t)=\omega$ for each $i$. Thus, the process $(\theta(t),r(t),i(t))$ can be decomposed into two independent processes: $\theta(t)$ that is deterministic  and $(r(t),i(t))$ that behaves as the process in Example~\ref{ex:pitch}.

The asymptotic behaviour of the process in \eqref{def:process} is now different when $\lambda>0$. If we take the initial $g$ in \eqref{def:process} as the product of two marginal densities $g(\theta,r)=g_1(\theta)g_2(r)$ then the mean $V$ satisfies
\[
\lim_{t\to\infty}\int_E |V(t,\theta,r)-\frac{1}{\kappa}g_1(\theta-\omega t)\big(f_0(r)+f_1(r)\big)|d\theta dr=0,
\]
where $f_0$ and $f_1$ are as above.
\end{example}

\section{Second and higher order correlations}\label{sec:hmoment-eqn}

In this section we continue the study of the stochastic process \eqref{def:process} by looking at equations for correlations.
These are extensions of the moment equations considered in Section \ref{s:moment-eqn}.
We provide the full analysis only for second order correlations, but higher order cases are straightforward and can be easily obtained by similar considerations. We use some notation from the theory of tensor products, and for a brief summary of standard definitions used here see \ref{app:tensor}.

We start with the definition of second order correlations:
\begin{equation}\label{d:ViN}
C_i(t,x,y)=\mathbb{E}(\mathbf{1}_{\{i(t)=i\}} u(t,x)u(t,y)),\quad
x,y\in E,\ i\in I,\ t\ge 0.
\end{equation}
We will show that following equation holds:
\begin{equation}\label{e:VN}
\frac{\partial}{\partial t} C_i = (A_i\otimes \Id)C_i + (\Id\otimes A_i)C_i + \sum_{j\in I} q_{ji}C_j,\quad i\in I,
\end{equation}
where $A_i\otimes \Id$ and $\Id\otimes A_i$ are defined on functions $(x,y)\mapsto f(x,y)$  with $f\in L^1(E^2)$ as tensor products of operators $A_i$ and $\Id$.
Especially, if $A_i f(x)=-\dive (b_{i}(x)f(x))$ then we have
\begin{gather}\label{e:Adiv}
    (A_i\otimes \Id)f(x,y)=
-\dive (b_i(x)f(x,y)),\\
(\Id\otimes A_i)f(x,y)=
-\dive (b_i(y)f(x,y)).
\end{gather}

We consider equation \eqref{e:VN} in the space
$L^1(E^2\times I)=L^1(E^2\times I,\mathcal{B}(E^2\times I), \mu^2 )$, where $\mu^2$ is the product of two copies of
the measure $m$ on $E$ and the counting measure on $I$. We define the family of operators on
$L^1(E^2\times I)$   by
\begin{equation}\label{eqN}
(S(t)f )_i=\sum_{j \in I} \mathbb{E}_j (\mathbf{1}_{\{i(t)=i\}}U(t)\otimes U(t) f_{j}),\quad i \in I,
\end{equation}
for $f  =(f_j)_{j \in I} \in L^1(E^2\times I)$, $f_j \in L^1(E^2)$, $j \in I$, and where $U(t)$ is as in \eqref{d:Ut}.

\begin{theorem}\label{th2}
Assume conditions \eqref{a:1} and \eqref{a:2}. Then the family of operators $\{S(t)\}_{t \ge 0}$ defined in \eqref{eqN} is a stochastic semigroup on
$L^1(E^2\times I)$. The infinitesimal generator of this semigroup is the operator $A+Q^T$, where
\begin{equation}\label{Atensor}
A(f_i)_{i\in I} =\big( (A_i\otimes Id)f_i+(Id\otimes A_i)f_i \big)_{i\in I} \quad \text{and} \quad Q^T(f_i)_{i\in I}= (\sum_{j\in I} q_{ji}f_j)_{i \in I},
\end{equation}
with $f_i \in \mathcal{D}(A_i) \otimes \mathcal{D}(A_i), i\in I$.
\end{theorem}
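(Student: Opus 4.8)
The plan is to recognize Theorem~\ref{th2} as a direct instance of Theorem~\ref{th1} applied to a ``doubled'' system, thereby avoiding any repetition of the semigroup and duality arguments. Concretely, I would set $\tilde E=E^2$ with the product measure $m\otimes m$, and for each $i\in I$ consider the tensor product family $\tilde P_i(t)=P_i(t)\otimes P_i(t)$ acting on $L^1(E^2)$. Since $(P\otimes P)(R\otimes R)=(PR)\otimes(PR)$, the random composition $U(t)\otimes U(t)$ built along a fixed sample path $\omega$ coincides with the operator obtained from the $\tilde P_i$ via the construction in \eqref{d:Ut}. Hence the family $\{S(t)\}_{t\ge0}$ in \eqref{eqN} is exactly the family $\{P(t)\}_{t\ge0}$ of Theorem~\ref{th1} associated with the data $(\tilde E,\{\tilde P_i(t)\}_{i\in I})$ and the same Markov chain $i(t)$. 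It therefore suffices to check that this doubled data satisfies the two hypotheses \eqref{a:1} and \eqref{a:2}, and to identify the generators of the $\tilde P_i$.

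First I would record the standard tensor product facts (see \ref{app:tensor}). The family $\{P_i(t)\otimes P_i(t)\}_{t\ge0}$ is a stochastic semigroup on $L^1(E^2)$: positivity and the $L^1$-norm identity on densities are preserved under the tensor product, strong continuity follows from that of each factor, and the semigroup law is immediate from $(P_i(t)\otimes P_i(t))(P_i(s)\otimes P_i(s))=P_i(t+s)\otimes P_i(t+s)$. Its generator, restricted to the algebraic tensor product $\mathcal{D}(A_i)\otimes\mathcal{D}(A_i)$, is $A_i\otimes\Id+\Id\otimes A_i$, which is precisely the operator appearing in \eqref{Atensor}; this will supply the claimed form of $A$ in the generator of $\{S(t)\}_{t\ge0}$ once Theorem~\ref{th1} is in force.

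Next I would verify \eqref{a:1} and \eqref{a:2} for the doubled system. For the Banach space I would take $\tilde B=B\otimes B$ with the dual semigroup $\tilde T_i(t)=T_i(t)\otimes T_i(t)$, so that the factorized pairing $\langle (P_i(t)\otimes P_i(t))g,\,h\rangle=\langle g,\,(T_i(t)\otimes T_i(t))h\rangle$ follows from \eqref{a:adjoint} applied in each variable; in the Frobenius-Perron setting of Corollary~\ref{c:1} this is transparent, since $\tilde B=C_0(E^2)$ and $\tilde T_i(t)h(x,y)=h(\pi_i(t,x),\pi_i(t,y))$. For the generating family I would use $\tilde{\mathcal C}=\{F_1\times F_2:F_1,F_2\in\mathcal C\}$, which is closed under intersections and generates $\mathcal B(E^2)$; given approximating sequences $h_n\downarrow\mathbf 1_{F_1}$ and $g_n\downarrow\mathbf 1_{F_2}$ from $\mathcal H$, the elementary tensors $h_n(x)g_n(y)$ lie in $\tilde B$, are nonincreasing (being products of nonincreasing nonnegative functions), and converge pointwise to $\mathbf 1_{F_1\times F_2}$, so \eqref{e:approx} holds.

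With \eqref{a:1} and \eqref{a:2} established for $(\tilde E,\{\tilde P_i(t)\})$, Theorem~\ref{th1} yields at once that $\{S(t)\}_{t\ge0}$ is a stochastic semigroup on $L^1(E^2\times I)$ with generator $A+Q^T$, where $A$ is the operator in \eqref{Atensor}. The step I expect to require the most care is the generator identification: one must confirm that $A_i\otimes\Id+\Id\otimes A_i$, initially defined only on $\mathcal{D}(A_i)\otimes\mathcal{D}(A_i)$, actually generates $\{P_i(t)\otimes P_i(t)\}$ (equivalently, that this algebraic tensor product is a core), and that $\tilde T_i(t)$ is genuinely strongly continuous on the chosen tensor completion $\tilde B$. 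Both are classical for $C_0$-semigroups on well-behaved tensor products, and in the concrete Frobenius-Perron case they are immediate, so no new phenomena arise beyond those already handled in Theorem~\ref{th1}.
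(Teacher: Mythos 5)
Your proposal is correct and takes essentially the same route as the paper: both identify $\{S(t)\}_{t\ge 0}$ with the Theorem~\ref{th1} construction applied to the tensored semigroups $P_i(t)\otimes P_i(t)$ on $L^1(E^2)$, verify condition~(I) via the $\pi$-system of product sets $F_1\times F_2$ with approximants $h_n\otimes g_n$, verify condition~(II) via the dual semigroups $T_i(t)\otimes T_i(t)$, and identify the generator through the classical fact (Proposition~\ref{pro1}) that $A_i\otimes\Id+\Id\otimes A_i$ on the core $\mathcal{D}(A_i)\otimes\mathcal{D}(A_i)$ generates $P_i(t)\otimes P_i(t)$. The only refinement in the paper's proof is that the dual pairing is set up on the injective tensor completion $B\widecheck{\otimes}B$ (extending the duality from elementary tensors by density) rather than on the algebraic product $B\otimes B$, a point you flag at the end but leave implicit.
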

\begin{proof}
Observe that
\[
U(t)\otimes U(t)=(P_{i(t_{N(t)})}(t-t_{N(t)})\otimes P_{i(t_{N(t)})}(t-t_{N(t)}))\circ  (U(t_{N(t)}) \otimes  U(t_{N(t)})),\quad t\ge 0.
\]
Let $i\in I$. Since $\{P_i(t)\}_{t\ge 0}$ is a stochastic semigroup on $L^1(E)$, we see that  $P_i(t) \otimes P_i(t)$ is a stochastic semigroup on $L^1(E^2)$, see Corollary \ref{c:stp}. Taking the injective tensor product space $B\widecheck{\otimes}B$ (see \ref{app:tensor} for the notation), we see that $T_i(t)\otimes T_i(t)$ is a $C_0$-semigroup on $B\widecheck{\otimes}B$ satisfying
\begin{equation}
\langle P_i(t) \otimes P_i(t)(f_1 \otimes f_2),h_1 \otimes h_2 \rangle=\langle f_1 \otimes f_2, T_i(t)\otimes T_i(t) (h_1 \otimes h_2) \rangle,
\end{equation}
where $f_1 \otimes f_2\in L^1(E) \otimes L^1(E) ,h_1 \otimes h_2 \in B \otimes B, t\ge 0$. Thus we have
\begin{equation}
\label{a:ad2}
\langle (P_i(t) \otimes P_i(t))f,h\rangle =\langle f, (T_i(t)\otimes T_i(t)) h \rangle, \quad t\ge 0,
\end{equation}
for all $f\in L^1(E) \otimes L^1(E)$ and $h\in  B \otimes B$. Since $L^1(E) \otimes L^1(E)$ is dense in $L^1(E^2)$ and $B \otimes B$ is dense in $B \widecheck{\otimes} B$, we conclude that \eqref{a:ad2} holds for all $f\in L^1(E^2)$ and $h\in B \widecheck{\otimes} B$.

The $\sigma$-algebra $\mathcal{B}(E^2)$ is generated by the $\pi$-system of  sets $\mathcal{C}\times\mathcal{C}=\{F_1\times F_2:F_1,F_2\in \mathcal{C}\}$. Given the set $F=F_1\times F_2$ we consider the sequence $h_n(x_1,x_2)=h_{1,n}(x_1)h_{2,n}(x_2)$, where $h_{1,n}$ and $h_{2,n}$ are sequences approximating the functions $\mathbf{1}_{F_1}$ and $\mathbf{1}_{F_2}$. Since  $h_n$ converges to $\mathbf{1}_F$, we see that condition \eqref{a:1} holds. Consequently,  Theorem~\ref{th1} implies that $\{S(t)\}_{t\ge 0}$ is a stochastic semigroup on $L^1(E^2\times I)$.
It follows from Proposition \ref{pro1} that for each $i \in I$ the generator of the semigroup $P_i(t) \otimes P_i(t)$ is the closure
of the operator $A_i \otimes \Id + \Id \otimes A_i$ defined on the core $\mathcal{D}(A_i) \otimes \mathcal{D}(A_i)$.
Thus the closure of the operator $A$ defined in \eqref{Atensor} is the generator
of the stochastic semigroup $(P_i(t) \otimes P_i(t))_{i \in I}.$ Hence, Theorem \ref{th1} implies that
the generator of the semigroup $\{S(t)\}_{t \ge 0}$ is the operator $A+Q^T$.
\end{proof}

Using Theorem \ref{th2} we obtain the following:
\begin{corollary} Assume conditions \eqref{a:1} and \eqref{a:2}. Let $S(t)$ be given by \eqref{eqN} and $u$ by
\eqref{d:pdm}. For each $g\in L^1(E^2)$ and $l\in I$ such that $u(0)=g$ and $i(0)=l$ we have $C_i(t,x,y)=(S(t)f )_i(x,y),$
where $C_i$ is as in \eqref{d:ViN} and $f =(f_j)_{j \in I}$ is of the form
\[
f_j=
\begin{cases}
g,\quad j=l, \\ 0,\quad j \neq l.
\end{cases}
\]
\end{corollary}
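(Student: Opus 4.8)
The plan is to prove the statement as the exact two-variable analogue of Corollary~\ref{c:cor2}, with $\{S(t)\}_{t\ge0}$ from \eqref{eqN} in the role of $\{P(t)\}_{t\ge0}$ and the $L^1(E^2)$-valued solution of \eqref{d:pdm} in place of the $L^1(E)$-valued one. First I would make this solution explicit. Reading \eqref{d:pdm} with the two-variable generators $A_i\otimes\Id+\Id\otimes A_i$ of Theorem~\ref{th2}, the deterministic flow on $L^1(E^2)$ between two consecutive jumps of $i(t)$ is $P_i(s)\otimes P_i(s)$, so composing these flows along a sample path exactly as in the construction of $U(t)$ leading to \eqref{d:Ut} produces the random evolution operator $U(t)\otimes U(t)$; this is the tensor factorization already recorded at the start of the proof of Theorem~\ref{th2}. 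Hence the solution of \eqref{d:pdm} with $u(0)=g\in L^1(E^2)$ and $i(0)=l$ is $u(t)=(U(t)\otimes U(t))g$, whose value at $(x,y)$ reduces to the product $u(t,x)u(t,y)$ of \eqref{d:ViN} precisely in the special case $g=u_0\otimes u_0$ of tensor-square initial data, and which I take as the meaning of the integrand in \eqref{d:ViN} for general $g$.

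Next I would evaluate both sides. Since the initial state $i(0)=l$ is fixed, the expectation in \eqref{d:ViN} is over $\mathbb{P}_l$, so
\begin{equation*}
C_i(t,x,y)=\mathbb{E}_l\big(\mathbf{1}_{\{i(t)=i\}}\big((U(t)\otimes U(t))g\big)(x,y)\big),\quad i\in I.
\end{equation*}
On the other hand, inserting $f=(f_j)_{j\in I}$ with $f_l=g$ and $f_j=0$ for $j\neq l$ into \eqref{eqN} collapses the sum over starting states to its single $j=l$ term, yielding
\begin{equation*}
(S(t)f)_i=\mathbb{E}_l\big(\mathbf{1}_{\{i(t)=i\}}(U(t)\otimes U(t))g\big),\quad i\in I.
\end{equation*}
These two right-hand sides differ only in that the first is a scalar, pointwise expectation while the second is the Bochner expectation of an $L^1(E^2)$-valued random element, so the identity reduces to identifying the pointwise values of that Bochner integral.

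This interchange of expectation and evaluation, i.e.\ passing from $\mathbb{E}_l\big((U(t)\otimes U(t))g\big)\in L^1(E^2)$ to $\mathbb{E}_l\big(((U(t)\otimes U(t))g)(x,y)\big)$, is the only nonroutine point and hence the main obstacle. I would dispatch it exactly as in Section~\ref{s:Dspace} and in the proof of Theorem~\ref{th1}: along each sample path $U(t,\omega)\otimes U(t,\omega)$ is a finite composition of stochastic operators on $L^1(E^2)$, by Corollary~\ref{c:stp}, so $\omega\mapsto(U(t,\omega)\otimes U(t,\omega))g$ is a norm-bounded, $\mathbb{P}_l$-Bochner-integrable $L^1(E^2)$-valued random element, and Hille's lemma for the Bochner integral \cite[Theorem~3.7.12]{hille} lets its expectation be tested coordinatewise against any $h\in L^\infty(E^2)$. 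Pairing both displays with an arbitrary such $h$ then gives equality as elements of $L^1(E^2)$, whence $C_i(t,x,y)=(S(t)f)_i(x,y)$ for a.e.\ $(x,y)$ and every $i\in I$, completing the proof.
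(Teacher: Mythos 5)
Your proof is correct and follows exactly the route the paper intends: the corollary is stated there without proof as an immediate consequence of the definition \eqref{eqN} together with the identification of the Bochner expectation $\mathbb{E}_l$ with the pointwise a.e.\ expectation set out at the end of Section~\ref{s:Dspace}, which is precisely your computation (collapse of the sum to $j=l$, tensor factorization of the random evolution, Hille's lemma with $L^\infty(E^2)$ test functions). You also correctly spotted and repaired the statement's small inconsistency --- with $u$ from \eqref{d:pdm} the integrand $u(t,x)u(t,y)$ in \eqref{d:ViN} equals $\big((U(t)\otimes U(t))g\big)(x,y)$ only for tensor-square initial data, so for general $g\in L^1(E^2)$ the integrand must be read as the two-variable evolution, exactly as you do.
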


\begin{remark}
If for each $i\in I$ the semigroup $\{P_i(t)\}_{t\ge 0}$ is as in~\eqref{d:Pit} then the operator $A+Q^T$ from Theorem~\ref{th2}  is also the generator of a stochastic semigroup   induced by the
stochastic process $(x(t),y(t),i(t))$, where
$(x(t),y(t))$ satisfies the system of equations:
\begin{equation*}
\begin{cases}
x'(t)=b_{i(t)}(x(t)), \\ y'(t)=b_{i(t)}(y(t)).
\end{cases}
\end{equation*}
\end{remark}

\section{Conclusion}\label{sec:concl}

In this paper we introduced the concept of randomly switching stochastic semigroups. We investigated a
stochastic evolution equation in $L^1$ space. Such a regime could explain the source of stochasticity when
observing the evolution of some population driven by a common environmental stimulus. Next, we studied the
first moment of the stochastic evolution equation solutions and found the correspondence between this moment equation and a deterministic
system of Fokker-Planck type equations for the distributions of the process in Euclidean state space. We concluded that the mean of the process at large time can be expressed by the stationary solutions of a Fokker-
Planck type system providing that they exist. Similarly, we connected the mean of the process at large time with
sweeping property. We gave then some examples of the application of our results to biological models in which
the underlying dynamics display a variety of bifurcations  and provided numerical simulations for them. Finally, we studied second order correlations of
solutions of the stochastic evolution equation and we provided a rigorous way to extend our considerations to
correlations of higher order. Thus, this paper extends and justifies analytically the numerical results of Bressloff \cite{bressloff2017stochastic}.

The next step would be to show convergence in distribution of the infinite dimensional process $(u(t),i(t))$ to a stationary distribution. In particular examples connected with diffusion processes such convergence is known, see \cite{lawleyetal2015,cloez2015}. However, the results of \cite{lawleyetal2015,cloez2015} are not applicable to our stochastic semigroups $\{P_i(t)\}_{t\ge 0}$ on $L^1$ spaces because we have preservation of the norm while in these papers strict contraction on average was required. We hope to find in the future yet another approach that could be used for stochastic semigroups.

One possible future extension of this work is connected with addition of switching to stochastic PDEs driven by Gaussian noise or, more generally, by L\'evy noise, see \cite{peszat}. Another one could be related to randomly occurring phenomena in more complex systems like networks subjected to Markovian switching topology appearing in filtering problems as in \cite{liu2018} and \cite{ma2019}.  More careful recognition of these relations require further research.

\appendix\section{Tensor products}\label{app:tensor}  We recall some standard notation from the theory of tensor products~\cite{ryan}.
Let $\mathcal{X}_1$ and $\mathcal{X}_2$ be two Banach spaces of functions, i.e., either $\mathcal{X}_i$ is an $L^1$ space or it is a subspace of the space of all bounded measurable functions defined on a given set and equipped with the supremum norm. For $f_1\in \mathcal{X}_1$ and $f_2\in \mathcal{X}_2$ we identify the function $(x_1,x_2)\mapsto f_1(x_1)f_2(x_2)$ with the tensor  $f_1\otimes f_2$.  We define the tensor product space $\mathcal{X}_1\otimes \mathcal{X}_2$ as the set of all linear combinations of such tensors. The completion of the linear space $\mathcal{X}_1\otimes \mathcal{X}_2$  when equipped with  the projective norm
\[
\|h\|_{\pi}=\inf\{\sum_{k=1}^n\|f_k\|\|g_k\|: f_k\in \mathcal{X}_1, g_k\in \mathcal{X}_2, h=\sum_{k=1}^n f_k\otimes g_k\}
\]
is called the \emph{projective tensor product} of the spaces $\mathcal{X}_1$ and $\mathcal{X}_2$ and will be denoted by $\mathcal{X}_1\hat{\otimes} \mathcal{X}_2$.

It is known \cite[Chapter 2]{ryan} that $L^1(E_1,\mathcal{E}_1,m_1)\hat{\otimes} L^1(E_2,\mathcal{E}_2,m_2)$ is isometrically isomorphic with $L^1(E_1\times E_2,\mathcal{E}_1\times \mathcal{E}_2,m_1\times m_2)$.
If instead we consider $\mathcal{X}_1\otimes \mathcal{X}_2$ with  the injective norm
\[
\|h\|_{\varepsilon}=\sup\{|(\gamma_1\otimes\gamma_2)(h)|: \gamma_i\in \mathcal{X}_i^{*}, \|\gamma_i\|\le 1 \},
\]
where $\mathcal{X}_i^{*}$ is the dual of $\mathcal{X}_i$ and
\[
(\gamma_1\otimes\gamma_2)(h)=\sum_{k=1}^n \gamma_1(f_k)\gamma_2(g_k)\quad \text{for } h=\sum_{k=1}^n f_k\otimes g_k,
\]
then the completion  of $\mathcal{X}_1\otimes \mathcal{X}_2$ is called the \emph{injective tensor product} and it will be denoted by $\mathcal{X}_1\widecheck{\otimes} \mathcal{X}_2$. In particular, if  $C(E_i)$ is the space of continuous functions on a compact space $E_i$ then $C(E_1)\widecheck{\otimes}C(E_2)$ is the space $C(E_1\times E_2)$, by \cite[Section 3.2]{ryan}.
Note that if $\mathcal{Y}_i$ is a closed linear subspace of the Banach space $\mathcal{X}_i$ then $\mathcal{Y}_1\widecheck{\otimes} \mathcal{X}_2$ and $\mathcal{X}_1\widecheck{\otimes} \mathcal{Y}_2$ are closed liner subspaces of $\mathcal{X}_1\widecheck{\otimes} \mathcal{X}_2$.

Given two linear and bounded operators $S_i\colon \mathcal{X}_i\to \mathcal{X}_i$ the linear mapping $S_1\otimes S_2\colon \mathcal{X}_1\otimes \mathcal{X}_2\to \mathcal{X}_1\otimes \mathcal{X}_2$  defined by
\[
(S_1\otimes S_2)(f_1\otimes f_2)=S_1(f_1)\otimes S_2(f_2)
\]
has a continuous extension to tensor product spaces.
We will use the following result from \cite[Section A-I.3, Proposition]{nagel86-0}:

\begin{proposition}\label{pro1}
Let $\{S_1(t) \}_{ t \ge 0}$ and $\{S_2(t) \}_{ t \ge 0}$ be $C_0$-semigroups on some Banach spaces $\mathcal{X}_1$, $\mathcal{X}_2$ and let the operators
$(A_1,\mathcal{D}(A_1))$, $(A_2,\mathcal{D}(A_2))$ be their generators. Then the family
\begin{equation}
\{S_1(t) \otimes S_2(t)\}_{ t \ge 0}
\end{equation}
is a $C_0$-semigroup on both  projective and injective tensor products of  $\mathcal{X}_1$ and $\mathcal{X}_2$.
The closure of
\begin{equation}
A_1 \otimes \Id  + \Id \otimes A_2,
\end{equation}
defined on the core $\mathcal{D}(A_1) \otimes \mathcal{D}(A_2)$, is its generator.
\end{proposition}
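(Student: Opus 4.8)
The plan is to verify the defining properties of a $C_0$-semigroup directly on the algebraic tensor product $\mathcal{X}_1\otimes \mathcal{X}_2$ and then extend to its completion, handling the projective norm $\|\cdot\|_{\pi}$ and the injective norm $\|\cdot\|_{\varepsilon}$ simultaneously. The key uniform estimate is that the tensor product of bounded operators is submultiplicative in either norm, $\|S_1(t)\otimes S_2(t)\|\le \|S_1(t)\|\,\|S_2(t)\|$. Since each $\{S_i(t)\}_{t\ge 0}$ is a $C_0$-semigroup, the scalar functions $t\mapsto \|S_i(t)\|$ are bounded on every compact interval, so the operators $S_1(t)\otimes S_2(t)$ are uniformly bounded on compact time intervals in both tensor norms. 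This local boundedness is what will let me push identities from elementary tensors to the completion.

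First I would check the algebraic semigroup laws. On an elementary tensor $f_1\otimes f_2$ one has $(S_1(0)\otimes S_2(0))(f_1\otimes f_2)=f_1\otimes f_2$ and
\[
(S_1(t)\otimes S_2(t))(S_1(s)\otimes S_2(s))(f_1\otimes f_2)=S_1(t+s)f_1\otimes S_2(t+s)f_2,
\]
using the semigroup law in each factor; by linearity these hold on all of $\mathcal{X}_1\otimes \mathcal{X}_2$, and by density together with the uniform bounds above they extend to the completion. For strong continuity I would write, on an elementary tensor,
\[
S_1(t)f_1\otimes S_2(t)f_2-f_1\otimes f_2=(S_1(t)f_1-f_1)\otimes S_2(t)f_2+f_1\otimes (S_2(t)f_2-f_2),
\]
and bound its norm by $\|S_1(t)f_1-f_1\|\,\|S_2(t)f_2\|+\|f_1\|\,\|S_2(t)f_2-f_2\|$, which tends to $0$ as $t\downarrow 0$ by the strong continuity of each factor and the local boundedness of $\|S_2(t)\|$. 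Strong continuity then passes to finite linear combinations, and a standard $\varepsilon/3$ argument using the uniform bound on $\|S_1(t)\otimes S_2(t)\|$ over $[0,1]$ extends it to all elements of the completion. This establishes that $\{S_1(t)\otimes S_2(t)\}_{t\ge 0}$ is a $C_0$-semigroup on both tensor products.

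For the generator, write $G$ for the generator of $\{S_1(t)\otimes S_2(t)\}_{t\ge 0}$. For $f_1\in \mathcal{D}(A_1)$ and $f_2\in \mathcal{D}(A_2)$ I would decompose the difference quotient as
\[
\frac{1}{t}\big((S_1(t)\otimes S_2(t))(f_1\otimes f_2)-f_1\otimes f_2\big)=\Big(\tfrac{S_1(t)f_1-f_1}{t}\Big)\otimes S_2(t)f_2+f_1\otimes\Big(\tfrac{S_2(t)f_2-f_2}{t}\Big),
\]
and let $t\downarrow 0$. Continuity of the bilinear map $(\cdot)\otimes(\cdot)$ in either tensor norm, together with $\tfrac{1}{t}(S_1(t)f_1-f_1)\to A_1f_1$, $S_2(t)f_2\to f_2$, and $\tfrac{1}{t}(S_2(t)f_2-f_2)\to A_2f_2$, shows that the limit equals $A_1f_1\otimes f_2+f_1\otimes A_2f_2=(A_1\otimes \Id+\Id\otimes A_2)(f_1\otimes f_2)$. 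Hence $\mathcal{D}(A_1)\otimes \mathcal{D}(A_2)\subseteq \mathcal{D}(G)$ and $G$ agrees there with $A_1\otimes \Id+\Id\otimes A_2$; since $G$ is closed, it extends the closure of this operator.

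The main obstacle is the reverse inclusion, namely showing that $\mathcal{D}(A_1)\otimes \mathcal{D}(A_2)$ is a \emph{core} for $G$, so that $G$ is exactly the closure rather than a proper extension. I would invoke the standard core criterion: a dense, semigroup-invariant subspace of the domain is automatically a core. Density of $\mathcal{D}(A_1)\otimes \mathcal{D}(A_2)$ in the completed tensor product follows from the density of each $\mathcal{D}(A_i)$ in $\mathcal{X}_i$ (generators have dense domains) and the density of the algebraic tensor product in its completion. Invariance holds because each $S_i(t)$ leaves $\mathcal{D}(A_i)$ invariant, whence $(S_1(t)\otimes S_2(t))(\mathcal{D}(A_1)\otimes \mathcal{D}(A_2))\subseteq \mathcal{D}(A_1)\otimes \mathcal{D}(A_2)$. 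The core criterion then yields that $G$ is precisely the closure of $A_1\otimes \Id+\Id\otimes A_2$ defined on $\mathcal{D}(A_1)\otimes \mathcal{D}(A_2)$, completing the proof.
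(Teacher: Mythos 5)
Your proof is correct. Note, however, that the paper itself gives no proof of this proposition: it is imported verbatim from the cited reference (Arendt et al., \emph{One-parameter semigroups of positive operators}, Section A-I.3), so there is no internal argument to compare against. Your write-up is essentially the standard proof that the citation encapsulates: the algebraic semigroup identities on elementary tensors, the cross-norm estimate $\|f_1\otimes f_2\|=\|f_1\|\,\|f_2\|$ (valid for both $\|\cdot\|_\pi$ and $\|\cdot\|_\varepsilon$) combined with local boundedness of $\|S_i(t)\|$ to get strong continuity on the completion via an $\varepsilon/3$ argument, the difference-quotient splitting to show the generator $G$ extends $A_1\otimes\Id+\Id\otimes A_2$ on $\mathcal{D}(A_1)\otimes\mathcal{D}(A_2)$, and finally the standard core criterion (a dense, semigroup-invariant subspace of $\mathcal{D}(G)$ is a core) to conclude $G$ is exactly the closure. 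Each of these steps is sound: submultiplicativity of both tensor norms under $S_1\otimes S_2$ justifies the uniform bounds, invariance of $\mathcal{D}(A_1)\otimes\mathcal{D}(A_2)$ under $S_1(t)\otimes S_2(t)$ follows from invariance of each $\mathcal{D}(A_i)$ under $S_i(t)$, and density follows from density of the domains plus the cross-norm estimate. The one step worth flagging as doing real work is the core criterion; you correctly identify that without it you would only get that $G$ extends the closure, not equality, and your invocation of it is exactly how the gap is closed in the literature.
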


\begin{corollary}\label{c:stp}
If  $\{S_1(t) \}_{ t \ge 0}$ and $\{S_2(t) \}_{ t \ge 0}$ are stochastic semigroups on the spaces $L^1(E_1,\mathcal{E}_1,m_1)$ and $L^1(E_2,\mathcal{E}_2,m_2)$, respectively,  then  $\{S_1(t) \otimes S_2(t)\}_{ t \ge 0}$ is a stochastic semigroup on $L^1(E_1\times E_2,\mathcal{E}_1\times \mathcal{E}_2,m_1\times m_2)$.
\end{corollary}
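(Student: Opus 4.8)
The plan is to obtain the $C_0$-semigroup properties essentially for free from Proposition~\ref{pro1} and to concentrate all the work on showing that each individual operator $S_1(t)\otimes S_2(t)$ is stochastic. Indeed, since $L^1(E_1,\mathcal{E}_1,m_1)\hat{\otimes} L^1(E_2,\mathcal{E}_2,m_2)$ is isometrically isomorphic to $L^1(E_1\times E_2,\mathcal{E}_1\times \mathcal{E}_2,m_1\times m_2)$ (see \cite[Chapter 2]{ryan}), Proposition~\ref{pro1} immediately tells us that $\{S_1(t)\otimes S_2(t)\}_{t\ge0}$ is a $C_0$-semigroup on $L^1(E_1\times E_2)$; in particular each $S_1(t)\otimes S_2(t)$ is a bounded linear operator there. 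It therefore remains only to verify that $S_1(t)\otimes S_2(t)$ maps the set of densities $D\subset L^1(E_1\times E_2)$ into itself, equivalently that it preserves positivity and preserves the integral of every nonnegative function.

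I would first record two elementary consequences of the hypothesis: since $S_i(t)$ is stochastic, for every nonnegative $g\in L^1(E_i)$ one has $S_i(t)g\ge0$ and $\int_{E_i}S_i(t)g\,dm_i=\int_{E_i}g\,dm_i$ (apply $S_i(t)$ to $g/\|g\|\in D$ and rescale). Next I would compute the action of $S_1(t)\otimes S_2(t)$ on the generating rectangles. For $A_i\in\mathcal{E}_i$ of finite measure, the indicator $\mathbf{1}_{A_1\times A_2}$ is the simple tensor $\mathbf{1}_{A_1}\otimes\mathbf{1}_{A_2}$, and by definition
\[
(S_1(t)\otimes S_2(t))(\mathbf{1}_{A_1}\otimes\mathbf{1}_{A_2})=S_1(t)\mathbf{1}_{A_1}\otimes S_2(t)\mathbf{1}_{A_2},
\]
that is, the function $(x_1,x_2)\mapsto (S_1(t)\mathbf{1}_{A_1})(x_1)\,(S_2(t)\mathbf{1}_{A_2})(x_2)$. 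By the two properties just noted this function is nonnegative, and by Tonelli its integral equals $\big(\int S_1(t)\mathbf{1}_{A_1}\,dm_1\big)\big(\int S_2(t)\mathbf{1}_{A_2}\,dm_2\big)=m_1(A_1)m_2(A_2)=\int \mathbf{1}_{A_1\times A_2}\,d(m_1\times m_2)$. By linearity the same two conclusions, nonnegativity of the image and preservation of the integral, hold for every nonnegative step function of the form $\sum_k c_k\mathbf{1}_{R_k}$ with $c_k\ge0$ and $R_k$ finite-measure rectangles.

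The final step would be a density argument. Such nonnegative rectangle step functions are dense in the nonnegative cone of $L^1(E_1\times E_2)$: nonnegative simple functions are dense, and using $\sigma$-finiteness each product-measurable set of finite measure can be approximated in measure by finite disjoint unions of rectangles, since these form an algebra generating $\mathcal{E}_1\times\mathcal{E}_2$. Hence, given a density $f$, I would choose nonnegative rectangle step functions $g_n\to f$ in $L^1$. Boundedness of $S_1(t)\otimes S_2(t)$ gives $(S_1(t)\otimes S_2(t))g_n\to(S_1(t)\otimes S_2(t))f$ in $L^1$; passing to an a.e.-convergent subsequence shows the limit is nonnegative, while continuity of the integral yields $\int(S_1(t)\otimes S_2(t))f=\lim_n\int g_n=\int f=1$. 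Thus $(S_1(t)\otimes S_2(t))f\in D$, completing the proof.

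I expect the positivity part to be the genuine obstacle rather than a formality. The operator $S_1(t)\otimes S_2(t)$ is defined only through continuous extension from the algebraic tensor product, and a general element there is a \emph{signed} combination of simple tensors, so positivity cannot be read off directly. The whole point is to funnel the argument through nonnegative rectangle step functions, where positivity is manifest, and then transport it to arbitrary nonnegative $f\in L^1(E_1\times E_2)$ via the $L^1$-density and a.e.-subsequence argument; the measure-theoretic approximation of product-measurable sets by finite unions of rectangles, together with the attendant use of $\sigma$-finiteness, is the step that needs the most care.
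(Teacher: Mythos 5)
Your proposal is correct and follows essentially the same route as the paper: the $C_0$-semigroup structure is obtained from Proposition~\ref{pro1} together with the isometry $L^1(E_1,\mathcal{E}_1,m_1)\hat{\otimes}L^1(E_2,\mathcal{E}_2,m_2)\cong L^1(E_1\times E_2,\mathcal{E}_1\times\mathcal{E}_2,m_1\times m_2)$, and stochasticity of each operator is checked on elementary tensors and then extended by density. The only difference is one of detail: the paper verifies integral preservation on simple tensors $f_1\otimes f_2$ and dismisses positivity with ``it is easy to see,'' whereas you spell out the rectangle step-function approximation and a.e.-subsequence argument that justifies precisely that claim.
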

\begin{proof}
For $f_i\in L^1(E_i,\mathcal{E}_i,m_i)$ we have
\[
\int_{E_1\times E_2}(S_1(t)\otimes S_2(t))(f_1\otimes f_2) d(m_1\times m_2) =\int_{E_1}S_1(t)f_1dm_1\int_{E_2}S_2(t)f_2dm_2.
\]
This implies that the operator $S_1(t)\otimes S_2(t)$ preserves the integral. It is easy to see that $S_1(t)\otimes S_2(t)$ is a positive operator, completing the proof.
\end{proof}

\section*{Acknowledgements}

We would like to thank Ryszard Rudnicki and Rados\l{}aw Wieczorek for helpful discussions.  The authors are especially appreciative of the comments of three anonymous referees that materially improved the presentation.    This research was supported in part by the Natural Sciences and Research Council of Canada (NSERC) and  the Polish NCN grant  2017/27/B/ST1/00100.


\end{document}